\newcommand{\Qp}{\mathbf{Q}_p}
\newcommand{\Cp}{\mathbf{C}_p}
\newcommand{\Zp}{\mathbf{Z}_p}
\newcommand{\FF}{\mathbf{F}}
\newcommand{\NN}{\mathbf{N}}
\newcommand{\ZZ}{\mathbf{Z}}
\newcommand{\OO}{\mathcal{O}}
\newcommand{\Qpbar}{\overline{\mathbf{Q}}_p}
\renewcommand{\phi}{\varphi}
\renewcommand{\projlim}{\varprojlim}
\renewcommand{\geq}{\geqslant}
\renewcommand{\leq}{\leqslant} 
\newcommand{\calR}{\mathcal{R}}
\renewcommand{\O}{\mathrm{O}}
\newcommand{\Gal}{\mathrm{Gal}}
\newcommand{\Mat}{\mathrm{Mat}}
\newcommand{\End}{\mathrm{End}}
\newcommand{\Hom}{\mathrm{Hom}}
\newcommand{\M}{\mathrm{M}}
\newcommand{\GL}{\mathrm{GL}}
\newcommand{\Id}{\mathrm{Id}}
\newcommand{\Nm}{\mathrm{N}}
\newcommand{\calC}{\mathcal{C}}
\newcommand{\Lie}{\mathrm{Lie}}
\newcommand{\Sol}{\mathrm{Sol}}
\newcommand{\vp}{\mathrm{val}_p}
\newcommand{\ve}{\mathrm{val}_{\mathbf{E}}}
\newcommand{\an}{\mathrm{an}}
\newcommand{\la}{\mathrm{la}}
\newcommand{\pa}{\mathrm{pa}}
\newcommand{\fpa}{\text{$F\!\mbox{-}\mathrm{pa}$}}
\newcommand{\dan}{\text{$\mbox{-}\mathrm{an}$}}
\newcommand{\fan}{\text{$F\!\mbox{-}\mathrm{la}$}}
\newcommand{\fla}{\text{$F\!\mbox{-}\mathrm{la}$}}
\newcommand{\unr}{\mathrm{unr}}
\newcommand{\cyc}{\mathrm{cyc}}
\newcommand{\bfont}{\mathbf{B}}
\newcommand{\afont}{\mathbf{A}}
\newcommand{\efont}{\mathbf{E}}
\newcommand{\bt}{\widetilde{\mathbf{B}}}
\newcommand{\at}{\tilde{\mathbf{A}}}
\newcommand{\et}{\widetilde{\mathbf{E}}}
\newcommand{\atplus}{\tilde{\mathbf{A}}^+}
\newcommand{\btplus}{\widetilde{\mathbf{B}}^+}
\newcommand{\etplus}{\widetilde{\mathbf{E}}^+}
\newcommand{\bdr}{\mathbf{B}_{\mathrm{dR}}}  
\newcommand{\btrig}[2]{\widetilde{\mathbf{B}}^{\dagger #1}_{\mathrm{rig} #2}} 
\newcommand{\brig}[2]{\mathbf{B}^{\dagger #1}_{\mathrm{rig} #2}}
\newcommand{\bdag}[1]{\mathbf{B}^{\dagger #1}}
\newcommand{\dtilde}{\widetilde{\mathrm{D}}}
\renewcommand{\ddag}[1]{\mathrm{D}^{\dagger #1}}
\newcommand{\dtdag}[1]{\widetilde{\mathrm{D}}^{\dagger #1}}
\newcommand{\drig}[1]{\mathrm{D}^{\dagger #1}_{\mathrm{rig}}}
\newcommand{\dtrig}[1]{\widetilde{\mathrm{D}}^{\dagger #1}_{\mathrm{rig}}}
\newcommand{\dfont}{\mathrm{D}}
\newcommand{\dsen}{\mathrm{D}_{\mathrm{Sen}}}
\newcommand{\mfont}{\mathrm{M}}
\newcommand{\dpar}[1]{(\!( #1 )\!)}
\newcommand{\dcroc}[1]{[\![ #1 ]\!]}
\newcommand{\LT}{\operatorname{LT}}
\newcommand{\chicyc}{\chi_{\cyc}}
\newcommand{\chilt}{\chi_\pi}
\newcommand{\rig}{\mathrm{rig}}
\newcommand{\emb}{\Sigma}
\newcommand{\LA}{\mathrm{LA}}
\newcommand{\weil}{\mathrm{W}}
\newcommand{\ubar}{\overline{u}}
\newcommand{\Qpnr}{\mathbf{Q}_p^{\unr}}
\newcommand{\cbf}{\mathbf{c}}
\newcommand{\ibf}{\mathbf{i}}
\newcommand{\jbf}{\mathbf{j}}
\newcommand{\kbf}{\mathbf{k}}
\newcommand{\ybf}{\mathbf{y}}
\newcommand{\unbf}{\mathbf{1}}
\author{Laurent Berger}
\title[Multivariable $(\varphi,\Gamma)$-modules and locally analytic vectors]
{Multivariable $(\varphi,\Gamma)$-modules and \\ locally analytic vectors}
\subjclass{11F; 11S; 12H; 13J; 22E; 46S}
\keywords{$(\varphi,\Gamma)$-module; locally analytic vector; $p$-adic period; Lubin-Tate group; $p$-adic monodromy}
\thanks{The author's work was supported in part by the Agence Nationale de la Recherche (ANR) grant Th\'eHopaD (Th\'eorie de Hodge $p$-adique et D\'evelop\-pements) ANR-11-BS01-005}
\begin{document}

\begin{abstract}
Let $K$ be a finite extension of $\mathbf{Q}_p$ and let $G_K = \mathrm{Gal}(\overline{\mathbf{Q}}_p/K)$. There is a very useful classification of $p$-adic representations of $G_K$ in terms of cyclotomic $(\varphi,\Gamma)$-modules (\emph{cyclotomic} means that $\Gamma=\mathrm{Gal}(K_\infty/K)$ where $K_\infty$ is the cyclotomic extension of $K$). One particularly convenient feature of the cyclotomic theory is the fact that the $(\varphi,\Gamma)$-module attached to any $p$-adic representation is overconvergent.

Questions pertaining to the $p$-adic local Langlands correspondence lead us to ask for a  generalization of the theory of $(\varphi,\Gamma)$-modules, with the cyclotomic extension replaced by an infinitely ramified $p$-adic Lie extension $K_\infty / K$. It is not clear what shape such a generalization should have in general. Even in the case where we have such a generalization, namely the case of a Lubin-Tate extension, most $(\varphi,\Gamma)$-modules fail to be overconvergent.

In this article, we develop an approach that gives a solution to both problems at the same time, by considering the locally analytic vectors for the action of $\Gamma$ inside some big modules defined using  Fontaine's rings of periods.  

We show that, in the cyclotomic case, we recover the usual overconvergent $(\varphi,\Gamma)$-modules. In the Lubin-Tate case, we can prove, as an application of our theory, a folklore conjecture in the field stating that $(\varphi,\Gamma)$-modules attached to $F$-analytic representations are overconvergent. 
\end{abstract}

\maketitle

\tableofcontents

\setlength{\baselineskip}{18pt}

\section*{Introduction}
\label{nota}

Let $K$ be a finite extension of $\Qp$ and let $G_K=\Gal(\Qpbar/K)$. The basic idea of $p$-adic Hodge theory is to construct an intermediate extension $K \subset K_\infty \subset \Qpbar$ such that $K_\infty/K$ is simple enough, but still contains most of the ramification of $\Qpbar/K$ (we say that $K_\infty/K$ is deeply ramified, see \cite{CG96}). This is for example the case if $K_\infty / K$ is an infinitely ramified $p$-adic Lie extension (\cite{S72} and \cite{CG96}). The usual choice for $K_\infty/K$ is the cyclotomic extension. 

One important application of this idea is Fontaine's construction \cite{F90} of cyclotomic $(\phi,\Gamma)$-modules. By a theorem of Cherbonnier and Colmez \cite{CC98}, these cyclotomic $(\phi,\Gamma)$-modules are always overconvergent; this is a fundamental result which allows us to relate Fontaine's $(\phi,\Gamma)$-modules and classical $p$-adic Hodge theory. The resulting $(\phi,\Gamma)$-modules give rise to free modules of finite rank over the Robba ring. 

If $V$ is a $p$-adic representation of $G_K$, then the cyclotomic $(\phi,\Gamma)$-module $\drig{}(V)$ over the Robba ring attached to $V$ can be constructed in the following way. Let $K_\infty$ be the cyclotomic extension of $K$, let $H_K=\Gal(\Qpbar/K_\infty)$ and let $\Gamma_K=\Gal(K_\infty/K)$. Let $\btrig{}{}$ be one of the big rings of $p$-adic periods \cite{LB2}, let $\btrig{}{,K} = (\btrig{}{})^{H_K}$ and let $\dtrig{}{}(V) = (\btrig{}{} \otimes_{\Qp} V)^{H_K}$. By \'etale descent, we have $\btrig{}{} \otimes_{\btrig{}{,K}} \dtrig{}{}(V) = \btrig{}{} \otimes_{\Qp} V$. We next use an analogue of Tate's normalized traces to descend from $\dtrig{}{}(V)$ to a module $\drig{}(V)$ over the Robba ring $\brig{}{,K}$. This is the basic idea of the Colmez-Sen-Tate method \cite{BC08}. 

However, the space $\dtrig{}{}(V)$ is also a $p$-adic representation of $\Gamma_K$, and it is easy to see that $\drig{}(V)$ consists of vectors of $\dtrig{}{}(V)$ that are locally analytic for the action of $\Gamma_K$ (more precisely: pro-analytic, denoted by ${}^{\pa}$) so that $\drig{}(V) \subset \dtrig{}{}(V)^{\pa}$. In fact, by theorem \ref{pgmancy}, we have 
\[ \dtrig{}{}(V)^{\pa} = \cup_{n \geq 0} \phi^{-n}(\drig{}(V)). \]

The construction of the $p$-adic local Langlands correspondence for $\GL_2(\Qp)$ (see for instance \cite{ICBM}, \cite{CGL2} and \cite{LBGL}) uses these cyclotomic $(\phi,\Gamma)$-modules in an essential way. In order to extend this correspondence to $\GL_2(F)$, where $F$ is a finite extension of $\Qp$, it seems necessary to have at our disposal a theory of $(\phi,\Gamma)$-modules for which $\Gamma=\Gal(K_\infty/K)$ where $F \subset K$ and $K_\infty$ is generated by the torsion points of a Lubin-Tate group attached to a uniformizer of $F$. Generalizing the theory of $(\phi,\Gamma)$-modules to higher dimensional $p$-adic Lie groups $\Gamma$ is a difficult problem, which is raised in the introduction to \cite{F90}. It does not seem to be always possible; for example, the main result of \cite{LTFN} implies that under a reasonable additional assumption, $\Gamma$ needs to be abelian for the theory to work. 

If $K_\infty/K$ is a Lubin-Tate extension, then the theory does extend: using Fontaine's classical construction, we can attach to each representation of $G_K$ a Lubin-Tate $(\phi_q,\Gamma_K)$-module over a certain $2$-dimensional local field $\bfont_K$ (\cite{F90} and \cite{KR09}), but the resulting $(\phi,\Gamma)$-modules are usually not overconvergent \cite{FX12}. 

Our solution to the problems of extending the theory and of the lack of overconvergence is to construct $(\phi,\Gamma)$-modules with coefficients in rings of pro-analytic vectors, which is a straightforward generalization of the above observation that $\dtrig{}{}(V)^{\pa} = \cup_{n \geq 0} \phi^{-n}(\drig{}(V))$ in the cyclotomic case. Our main result in this direction is the following (see theorem \ref{lagenok}, and the rest of the article for  notation).

\begin{enonce*}{Theorem A}
If $K_\infty$ is a $p$-adic Lie extension that contains a subextension $K_\infty^\eta$, cut out by an unramified twist $\eta \chicyc$ of the cyclotomic character, and $\btrig{}{,K} = (\btrig{}{})^{H_K}$, then $\dtdag{}_{\rig,K}(V)^{\pa} = (\btrig{}{} \otimes_{\Qp} V)^{H_K,\pa}$ is a free $(\btrig{}{,K})^{\pa}$-module of rank $\dim(V)$, stable under $\phi$ and $\Gamma_K = \Gal(K_\infty/K)$.
\end{enonce*}

The condition on $K_\infty$ is always satisfied if $K_\infty$ is a Lubin-Tate extension. Theorem A allows us to construct $(\phi,\Gamma)$-modules over rings of pro-analytic vectors such as $(\btrig{}{,K})^{\pa}$. It would be interesting to determine the precise structure of these rings. When $K_\infty/K$ is a Lubin-Tate extension, so that $\Gamma_K$ is an open subgroup of $\OO_F^\times$, we show (theorem \ref{btanqp}) that $(\btrig{}{,K})^{\pa}$ contains as a dense subset the ring $\phi^{-\infty}(\calR)$, where $\calR$ is a Robba ring in $[F:\Qp]$ variables (see \cite{ZS}). This is why we call $(\phi,\Gamma)$-modules over $(\btrig{}{,K})^{\pa}$ \emph{multivariable $(\phi,\Gamma)$-modules}.

We next give an application of theorem A to the overconvergence of certain Lubin-Tate $(\phi,\Gamma)$-modules. We first compute the pro-$F$-analytic vectors $(\btrig{}{,K})^{\fpa}$ of $\btrig{}{,K}$ when $K_\infty$ is a Lubin-Tate extension. The following is a corollary of the more precise theorem \ref{xilocan}, where $\brig{}{,K}$ is the Robba ring in one Lubin-Tate variable.

\begin{enonce*}{Theorem B}
If $K_\infty$ is a Lubin-Tate extension, $(\btrig{}{,K})^{\fpa} =\cup_{n \geq 0} \phi_q^{-n}(\brig{}{,K})$.
\end{enonce*}

We also determine enough of the structure of the ring $(\btrig{}{,K})^{\pa}$ in the Lubin-Tate setting to be able to prove a monodromy theorem concerning the descent from $(\btrig{}{,K})^{\pa}$ to $(\btrig{}{,K})^{\fpa}$. We refer to theorem \ref{monoconj} for a precise statement. These results suggest the possibility, in certain cases, of descending $\dtdag{}_{\rig,K}(V)^{\pa}$ to a module over $(\btrig{}{,K})^{\fpa}$ by solving $p$-adic analogues of the Cauchy-Riemann equations. 

Recall now that if $F$ is a finite extension of $\Qp$ contained in $K$, and if $V$ is an $F$-linear representation of $G_K$, then we say that $V$ is $F$-analytic if $\Cp \otimes_F^\tau V$ is the trivial semilinear $\Cp$-representation for all embeddings $\tau : F \to \Qpbar$ with $\tau \neq \Id$. This definition is the natural generalization of Kisin and Ren's notion of $L$-crystalline representations (\S 3.3.7 of \cite{KR09}). Recall that using Fontaine's classical theory, we can attach to each representation of $G_K$ a Lubin-Tate $(\phi_q,\Gamma_K)$-module over a $2$-dimensional local field $\bfont_K$ (\cite{F90} and \cite{KR09}). Using our monodromy theorem, we prove the following result.

\begin{enonce*}{Theorem C}
The Lubin-Tate $(\phi_q,\Gamma_K)$-modules of $F$-analytic representations are overconvergent.
\end{enonce*}

Theorem C was previously known for $F=\Qp$ (Cherbonnier and Colmez \cite{CC98}) and for crystalline representations of $G_K$ (Kisin and Ren \cite{KR09}), as well as for some reducible representations (Fourquaux and Xie \cite{FX12}). Theorem C allows us to attach to each $F$-analytic representation $V$ of $G_K$ an \'etale $F$-analytic Lubin-Tate $(\phi_q,\Gamma_K)$-module $\drig{}(V)$ over $\brig{}{,K}$.

\begin{enonce*}{Theorem D}
The functor $V \mapsto \drig{}(V)$ gives an equivalence of categories between the category of $F$-analytic representations of $G_K$ and the category of \'etale $F$-analytic Lubin-Tate $(\phi_q,\Gamma_K)$-modules over $\brig{}{,K}$.
\end{enonce*}

The question of which Lubin-Tate $(\phi_q,\Gamma_K)$-modules are overconvergent had been around since \cite{CC98} but no significant progress was made for a long time. There were two reasonable possibilities: either all of them were overconvergent, or the $F$-analytic ones were overconvergent. Some unpublished computations of Fourquaux, as well as Kisin and Ren's results, suggested that theorem C was the correct answer. 

\subsection*{Structure of the article} There are ten sections in this article. The first two are devoted to reminders about Lubin-Tate formal groups, and locally analytic vectors. The third one summarizes the construction of many rings of $p$-adic periods, with a Lubin-Tate flavor instead of the traditional cyclotomic one. Sections 4 and 5 are devoted to the analysis of rings of $F$-analytic vectors (theorem \ref{xilocan}, which implies theorem B) and $\Qp$-analytic vectors in some of these rings of $p$-adic periods. Using these results, we prove in section 6 our monodromy theorem (theorem \ref{monoconj}). Section 7 contains reminders about (one variable) Lubin-Tate $(\phi_q,\Gamma_K)$-modules. In section 8, we prove that in the cyclotomic case (and the twisted cyclotomic case), the overconvergent $(\phi,\Gamma)$-modules can be recovered from spaces of locally analytic vectors (theorem \ref{pgmancy}). This idea is used in section 9 to define multivariable $(\phi,\Gamma)$-modules when $K_\infty/K$ is a $p$-adic Lie extension (theorem \ref{lagenok}, which is theorem A). Section 10 contains the proof of theorems C and D. The proofs rely on the constructions of sections 8 and 9, and the monodromy theorem of chapter 6.

\section{Lubin-Tate extensions}
\label{ltext}

Throughout this article, $F$ is a finite extension of $\Qp$ with ring of integers $\OO_F$, uniformizer $\pi$ and residue field $k_F$. Let $q=p^h$ be the cardinality of $k_F$ and let $F_0=W(k_F)[1/p]$. Let $e$ be the ramification index of $F$, so that $eh=[F:\Qp]$. Let $\sigma$ denote the absolute Frobenius map on $F_0$. Let $\emb$ denote the set of embeddings of $F$ in $\Qpbar$. If $\tau \in \emb$, then there exists $n(\tau) \in \ZZ / h \ZZ$ such that $\tau = [x\mapsto x^p]^{n(\tau)}$ on $k_F$. 

Let $\LT$ be a Lubin-Tate formal $\OO_F$-module attached to $\pi$. For $a \in \OO_F$, let $[a](T)$ denote the power series that gives the multiplication-by-$a$ map on $\LT$. We fix a local coordinate $T$ on $\LT$ such that $[\pi](T)=T^q + \pi T$. Let $F_n = F(\LT[\pi^n])$ and let $F_\infty = \cup_{n \geq 1} F_n$. Let $H_F=\Gal(\Qpbar/F_\infty)$ and $\Gamma_F = \Gal(F_\infty/F)$. By Lubin-Tate theory (see \cite{LT65}), $\Gamma_F$ is isomorphic to $\OO_F^\times$ via the Lubin-Tate character $\chilt : \Gamma_F \to \OO_F^\times$.

If $K$ is a finite extension of $F$, let $K_n = K F_n$ and $K_\infty = K F_\infty$ and $H_K=\Gal(\Qpbar/K_\infty)$ and $\Gamma_K=\Gal(K_\infty/K)$. There is an unramified character $\eta : G_F \to \Zp^\times$ such that $\Nm_{F/\Qp}(\chilt) = \eta\chicyc$. Let $K_\infty^\eta = \Qpbar^{\ker \eta\chicyc}$. We have $K_\infty^\eta \subset K_\infty$, and $\eta \chicyc$ identifies $\Gal(K_\infty^\eta / K)$ with an open subgroup of $\Zp^\times$.

Let $\Gamma_n = \Gal(K_\infty/K_n)$ so that $\Gamma_n = \{ g \in \Gamma_K$ such that $\chilt(g) \in 1+\pi^n \OO_F\}$. Let $u_0 = 0$ and for each $n \geq 1$, let $u_n \in \Qpbar$ be such that $[\pi](u_n)=u_{n-1}$, with $u_1 \neq 0$. We have $\vp(u_n) = 1/q^{n-1}(q-1)e$ if $n \geq 1$ and $F_n=F(u_n)$. Let $Q_n(T)$ be the minimal polynomial of $u_n$ over $F$. We have $Q_0(T)=T$, $Q_1(T)=[\pi](T)/T$ and $Q_{n+1}(T)=Q_n([\pi](T))$ if $n \geq 1$. Let $\log_{\LT}(T) = T + \O(\deg \geq 2) \in F\dcroc{T}$ denote the Lubin-Tate logarithm map, which converges on the  open unit disk and satisfies $\log_{\LT}([a](T)) = a  \cdot \log_{\LT}(T)$ if $a\in \OO_F$.  Note that $\log_{\LT}(T) = T \cdot \prod_{k \geq 1} Q_k(T)/\pi$. Let $\exp_{\LT}(T)$ denote the inverse of $\log_{\LT}(T)$.

\begin{prop}
\label{fouper}
If $\tau \in \emb \setminus \{ \Id \}$, then there exists $x_\tau \in \Cp$ such that $g(x_\tau) = x_\tau + \tau(\log \chilt(g))$ if $g \in G_{F^{\Gal}}$.
\end{prop}

\begin{proof}
In \S 3.2 of \cite{FX09}, Fourquaux constructs an element $\xi_\tau$ such that $\xi_\tau / g(\xi_\tau)  = \tau(\chilt(g))$ if $g \in G_{F^{\Gal}}$, so we can take $x_\tau = - \log \xi_\tau$.
\end{proof}

Let $E$ be a finite extension of $F$ that contains $F^{\Gal}$. It is a field of coefficients and we assume that $E/\Qp$ is Galois (this assumption can easily be removed at the expense of more notation). Let $\weil = \{ (\tau,n) \in \Gal(E/\Qp) \times \ZZ$ such that $n(\tau{\mid_F}) \equiv n \bmod{h}\}$. If $w=(\tau,n) \in \weil$, let $n(w) = n$.

\section{Locally analytic and pro-analytic vectors}
\label{ansec}

Let $G$ be a $p$-adic Lie group (in this article, $G$ is most of the time an open subgroup of $\OO_F^\times$) and let $W$ be a $\Qp$-Banach representation of $G$. The space of locally analytic vectors of $W$ is defined in \S 7 of \cite{ST03}. Here we follow the construction given in the monograph \cite{MEAMS}. Let $\NN = \ZZ_{\geq 0}$. Let $H$ be an open  subgroup of $G$ such that there exist coordinates $c_1,\hdots,c_d : H \to \Zp$ giving rise to an analytic bijection $\cbf : H \to \Zp^d$. If $w \in W$, we say that $w$ is an $H$-analytic vector if there exists a sequence $\{w_\kbf\}_{\kbf \in \NN^d}$ with $w_\kbf \to 0$ in $W$, such that $g(w) = \sum_{\kbf \in \NN^d} \cbf(g)^\kbf w_\kbf$ for all $g \in H$. Let $W^{H\dan}$ denote the space of $H$-analytic vectors. This space injects into $\calC^{\an}(H,W)$ and we endow it with the induced norm, which we denote by $\|\cdot\|_H$. We have $\|w\|_H = \sup_{\kbf \in \NN^d} \| w_{\kbf}\|$, and $W^{H\dan}$ is a Banach space. 

We say that a vector $w \in W$ is locally analytic if there exists an open subgroup $H$ as above such that $w \in W^{H\dan}$. Let $W^{\la}$ denote the space of such vectors. We have $W^{\la} = \cup_{H} W^{H\dan}$ where $H$ runs through a sequence of open subgroups of $G$. We endow $W^{\la}$ with the inductive limit topology, so that $W^{\la}$ is an LB space. In the rest of this article, we use the following results, some of which already appear in \S 2.1 of \cite{STLAV}.

\begin{lemm}
\label{ringlocan}
If $W$ is a ring, such that $\|xy\| \leq \|x\| \cdot \|y\|$ if $x,y \in W$, then $W^{H\dan}$ is a ring and $\|xy\|_H \leq \|x\|_H \cdot \|y\|_H$ if $x,y \in W^{H\dan}$.
\end{lemm}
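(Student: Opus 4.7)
The plan is to unpack the definition of $H$-analyticity. Given $x,y \in W^{H\dan}$, write their orbit expansions
\[
g(x) = \sum_{\ibf \in \NN^d} \cbf(g)^\ibf x_\ibf, \qquad g(y) = \sum_{\jbf \in \NN^d} \cbf(g)^\jbf y_\jbf,
\]
with $x_\ibf, y_\jbf \to 0$. Since $G$ acts by continuous ring automorphisms and the product is separately continuous (submultiplicativity), I can multiply these two absolutely convergent series termwise and regroup by total multi-index to obtain
\[
g(xy) = g(x)\,g(y) = \sum_{\kbf \in \NN^d} \cbf(g)^\kbf \Bigl( \sum_{\ibf+\jbf=\kbf} x_\ibf\, y_\jbf \Bigr).
\]
Thus the candidate coefficient of $xy$ is $(xy)_\kbf := \sum_{\ibf+\jbf=\kbf} x_\ibf y_\jbf$.

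The first thing to verify is that $(xy)_\kbf \to 0$ in $W$. This is the only step with any subtlety, and it uses the non-archimedean nature of $\|\cdot\|$ together with the fact that $x_\ibf \to 0$ and $y_\jbf \to 0$. Given $\varepsilon > 0$, choose $N$ with $\|x_\ibf\| \leq \varepsilon / \|y\|_H$ for $|\ibf| > N$ and $\|y_\jbf\| \leq \varepsilon / \|x\|_H$ for $|\jbf|>N$. Whenever $|\kbf| > 2N$, every decomposition $\ibf+\jbf=\kbf$ forces $|\ibf|>N$ or $|\jbf|>N$, so submultiplicativity gives $\|x_\ibf y_\jbf\| \leq \varepsilon$, and by the non-archimedean triangle inequality $\|(xy)_\kbf\| \leq \varepsilon$.

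With convergence in place, the identity $g(xy) = \sum_\kbf \cbf(g)^\kbf (xy)_\kbf$ for $g \in H$ shows that $xy \in W^{H\dan}$, so $W^{H\dan}$ is stable under multiplication and is therefore a ring. Finally, the norm estimate
\[
\|(xy)_\kbf\| \leq \sup_{\ibf+\jbf=\kbf} \|x_\ibf\| \cdot \|y_\jbf\| \leq \|x\|_H \cdot \|y\|_H
\]
holds for every $\kbf$, so taking the supremum over $\kbf$ yields $\|xy\|_H \leq \|x\|_H \cdot \|y\|_H$. The main (and essentially only) obstacle is the convergence check above; everything else is formal manipulation of the power series expansion.
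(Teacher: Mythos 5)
Your argument is correct and is exactly the ``straightforward computation'' that the paper has in mind (it gives no details, deferring to \S 2.1 of [BC14]): expand the orbit maps, multiply the series, regroup by total multi-index, and use the non-archimedean submultiplicative norm to get both the decay of the coefficients and the estimate $\|xy\|_H \leq \|x\|_H \cdot \|y\|_H$. No gaps; the convergence check you single out is indeed the only point requiring care.
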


\begin{proof}
Write $g(x) = \sum_{\kbf \in \NN^d} \cbf(g)^\kbf x_\kbf$ and $g(y) = \sum_{\kbf \in \NN^d} \cbf(g)^\kbf y_\kbf$. We have \[ g(xy) = \sum_{\kbf \in \NN^d} \cbf(g)^\kbf \left( \sum_{\ibf+\jbf=\kbf} x_\ibf y_\jbf \right), \]
and if $\ibf+\jbf=\kbf$, then $\| x_\ibf \cdot y_\jbf \| \leq \| x_\ibf \| \cdot \| y_\jbf\|$.
\end{proof}

\begin{prop}
\label{basan}
Let $G$ be a $p$-adic Lie group, let $B$ be a Banach $G$-ring, and let $W$ be a free $B$-module of finite rank, equipped with a compatible action of $G$. If the $B$-module $W$ has a basis $w_1,\hdots,w_d$ in which $g \mapsto \Mat(g)$ is a globally analytic function $G \to \GL_d(B) \subset \mathrm{M}_d(B)$, then 
\begin{enumerate}
\item $W^{H\dan} = \oplus_{j=1}^d B^{H\dan} \cdot w_j$ if $H$ is a subgroup of $G$;
\item $W^{\la} = \oplus_{j=1}^d B^{\la} \cdot w_j$.
\end{enumerate}
\end{prop}

\begin{proof}
This is proved in \S 2.1 of \cite{STLAV}, but we recall the proof for the convenience of the reader. Note that the isomorphism $W \simeq B^d$ provided by the choice of a basis of $W$ is a homeomorphism by the open image theorem. Since (2) follows from (1), it is enough to prove (1). The inclusion $\oplus_{i=1}^d B^{H\dan} \cdot w_i\subset W^{H\dan}$ is obvious, so we show the other inclusion. If $w \in W$, we can write $w= \sum_{i=1}^d b_i w_i$. 
Let $f_i : W \to B$ be the function $w \mapsto b_i$. Write $\Mat(g)=(m_{i,j}(g))_{i,j}$. If $h \in H$, then $h(w) = \sum_{i,j=1}^d h(b_i) m_{i,j}(h) w_j$ and if $w \in W^{H\dan}$, then $h \mapsto f_j(h(w)) = \sum_{i=1}^d h(b_i) m_{i,j}(h)$ is a globally analytic function $H \to B$. If $\Mat(h)^{-1}=(n_{i,j}(h))_{i,j}$, then $h(b_i) = \sum_{j=1}^d f_j(h(w)) n_{i,j}(h)$ so that $b_i \in B^{H\dan}$.
\end{proof}

Let $W$ be a Fr\'echet space, whose topology is defined by a sequence $\{ p_i \}_{i \geq 1}$  of seminorms. Let $W_i$ denote the Hausdorff completion of $W$ for $p_i$, so that $W = \projlim_{i \geq 1} W_i$. The space $W^{\la}$ can be defined if $W$ is a Fr\'echet space (see \cite{MEAMS}), but the resulting object is too small in general, leading us to make to following definition.

\begin{defi}\label{defpav}
If $W= \projlim_{i \geq 1} W_i$ is a Fr\'echet representation of $G$, then a vector $w \in W$ is \emph{pro-analytic} if its image $\pi_i(w)$ in $W_i$ is a locally analytic vector for all $i$. We denote by $W^{\pa}$ the set of such vectors.
\end{defi}

We extend the definition of $W^{\la}$ and $W^{\pa}$ to the cases when $W$ is an LB space and an LF space respectively. Note that if $W$ is an LB space, then $W^{\la} = W^{\pa}$. If  $W$ is an LF  space, then $W^{\la} \subset W^{\pa}$ but $W^{\pa}$ will generally be bigger.

\begin{prop}
\label{baspa}
Let $G$ be a $p$-adic Lie group, let $B$ be a Fr\'echet $G$-ring, and let $W$ be a free $B$-module of finite rank, equipped with a compatible action of $G$. If the $B$-module $W$ has a basis $w_1,\hdots,w_d$ in which $g \mapsto \Mat(g)$ is a pro-analytic function $G \to \GL_d(B) \subset \mathrm{M}_d(B)$, then $W^{\pa} = \oplus_{j=1}^d B^{\pa} \cdot w_j$.\end{prop}

\begin{proof}
If $w \in W$, then we can write $w=\sum_{j=1}^d b_j w_j$ with $b_j \in B$. If $w \in W^{\pa}$ and $i \geq 1$, then $\pi_i(b_j) \in B_i^{\la}$ for all $i$ by proposition \ref{basan}, so that $b_j \in B^{\pa}$.
\end{proof}

The map $\ell : g \mapsto \log_p \chilt(g)$ gives an $F$-analytic isomorphism between $\Gamma_n$ and $\pi^n \OO_F$ for $n \gg 0$. If $W$ is an $F$-linear Banach representation of $\Gamma_K$ and $n \gg 0$, then we say that an element $w \in W$ is $F$-analytic on $\Gamma_n$ if there exists a sequence $\{w_k\}_{k \geq 0}$ of elements of $W$ with $\pi^{nk} w_k \to 0$ such that $g(w) = \sum_{k \geq 0} \ell(g)^k w_k$ for all $g \in \Gamma_n$. Let $W^{\Gamma_n\dan,\fla}$ denote the space of such elements. Let $W^{\fla} = \cup_{n \geq 1} W^{\Gamma_n\dan,\fla}$. 

\begin{lemm}
\label{danfla}
We have $W^{\Gamma_n\dan,\fan} = W^{\Gamma_n\dan} \cap W^{\fan}$.
\end{lemm}

\begin{proof}
It is clear that $W^{\Gamma_n\dan,\fan} \subset W^{\Gamma_n\dan} \cap W^{\fan}$, so we prove the reverse inclusion. If $w \in W^{\fan}$, then we have $g(w) = \sum_{k \geq 0} \ell(g)^k w_k$ for $g \in \Gamma_m$ with $m \gg 0$. Let $c_1,\hdots,c_d$ be a set of coordinates on $\Gamma_n$, so that $c : \Gamma_n \to \Zp^d$ is a bijection. There exist $\lambda_1,\hdots,\lambda_d \in \OO_F$ such that $\ell(g) = \sum_{i=1}^d \lambda_i c_i(g)$. We have 
\[ g(w) = \sum_{k \geq 0} \sum_{|\jbf|=k} \cbf (g)^{\jbf} \mathbf{\lambda}^{\jbf} \binom{k}{\jbf} w_k \quad\text{if $g \in \Gamma_m$}. \]
If $g \in W^{\Gamma_n\dan}$, then $\mathbf{\lambda}^{\jbf} \binom{|\jbf|}{\jbf} w_{|\jbf|} \to 0$ as $|\jbf| \to +\infty$. Since $\ell : \Gamma_n \to \pi^n \OO_F$ is an isomorphism, at least one of the $\lambda_i$ belongs to $\pi^n \OO_F \setminus \pi^{n+1} \OO_F$. Taking $\jbf$ to be $k$ at the $i$-th entry and $0$ at the others, we get that $\pi^{nk} w_k \to 0$, so that $w \in W^{\Gamma_n\dan,\fan}$.
\end{proof}

Finally, recall the following simple result (\S 2.1 of \cite{STLAV}).

\begin{lemm}
\label{eventnorm}
If $w \in W^{\la}$, then $\|w\|_{\Gamma_m} = \|w\|$ for $m \gg 0$.
\end{lemm}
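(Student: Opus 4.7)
My plan is to fix an $n$ with $w \in W^{\Gamma_n\dan}$, write down the Taylor expansion of $g \mapsto g(w)$ on $\Gamma_n$, and then show that restricting to $\Gamma_m \subset \Gamma_n$ rescales its higher coefficients by increasingly small powers of $p$, so that the constant term (which is $w$ itself) eventually dominates.

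More explicitly, by hypothesis there is an expansion $g(w) = \sum_{\kbf \in \NN^d} \cbf(g)^\kbf w_\kbf$ for $g \in \Gamma_n$, with $w_\kbf \to 0$ in $W$ and $\|w\|_{\Gamma_n} = \sup_\kbf \|w_\kbf\|$. Taking $g=1$ (where $\cbf(1) = 0$) identifies $w_0 = w$, which already gives the easy inequality $\|w\| = \|w_0\| \leq \|w\|_{\Gamma_m}$ whenever the right-hand side is defined.

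For the reverse direction I would compare the chart on $\Gamma_n$ with the chart on $\Gamma_m$: writing $\cbf_m : \Gamma_m \xrightarrow{\sim} \Zp^d$ for the latter (so that $\cbf_m$ surjects onto $\Zp^d$), one has $\cbf|_{\Gamma_m} = p^{m-n} \cbf_m$ up to an automorphism of $\Zp^d$ that preserves the sup norm. Substituting $\cbf(g) = p^{m-n} \cbf_m(g)$ into the expansion yields
\[ g(w) = \sum_\kbf \cbf_m(g)^\kbf \cdot p^{(m-n)|\kbf|} w_\kbf \qquad (g \in \Gamma_m), \]
and hence $\|w\|_{\Gamma_m} = \sup_\kbf p^{-(m-n)|\kbf|}\|w_\kbf\|$. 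Since $w_\kbf \to 0$, the set $\{\|w_\kbf\|\}$ is bounded by some $M$; then the terms with $|\kbf| \geq 1$ contribute at most $p^{-(m-n)} M$, which falls below $\|w\|$ once $m$ is large enough (the case $w = 0$ is trivial by uniqueness of the expansion), while the $\kbf=0$ term contributes exactly $\|w\|$. This gives $\|w\|_{\Gamma_m} = \|w\|$ for $m \gg 0$.

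The only place where care is needed is in fixing conventions so that the scaling factor between the charts on nested subgroups is unambiguously a positive power of $p$; once that is arranged, the lemma reduces to the one-line ultrametric estimate above. I expect the same argument to go through verbatim in the $F$-analytic variant, with $p^{m-n}$ replaced by $\pi_F^{m-n}$.
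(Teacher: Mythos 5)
Your argument is correct and is essentially the standard one: the paper does not reprove this lemma but cites \S 2.1 of \emph{Th\'eorie de Sen et vecteurs localement analytiques}, where the proof is exactly this rescaling of Taylor coefficients so that the constant term $w_{\mathbf{0}}=w$ dominates for $m\gg 0$. The only point to watch is the one you flagged: in the Lubin--Tate chart the transition between the coordinates on $\Gamma_n$ and $\Gamma_m$ is multiplication by $\pi_F^{m-n}$ (a $\ZZ_p$-linear map whose entries are divisible by an unbounded power of $p$ as $m\to\infty$) rather than literally $p^{m-n}$ times a norm-preserving automorphism, but since all the argument needs is that the coefficients with $|\kbf|\geq 1$ acquire a factor tending to $0$, this changes nothing.
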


Recall that $E$ is a field that contains $F^{\Gal}$. If $\tau \in \emb$, then we have the derivative in the direction $\tau$, which is an element $\nabla_\tau \in E \otimes_{\Qp} \Lie(\Gamma_F)$. It can be constructed in the following way (see also \S 3.1 of \cite{DI12}). The $E$-vector space $\Hom_{\Qp}(F,E)$ is generated by the elements of $\emb$. If $W$ is an $E$-linear Banach representation of $\Gamma_K$ and if $w \in W^{\la}$ and $g \in \Gamma_K$, then there exists elements $\{ \nabla_\tau \}_{\tau \in \emb}$ of $F^{\Gal} \otimes_{\Qp} \Lie(\Gamma_F)$ such that we can write 
\[ \log g (w) = \sum_{\tau \in \emb} \tau(\ell(g)) \cdot \nabla_\tau(w). \]

With the same notation, there exist $m \gg 0$ and elements $\{w_\kbf \}_{\kbf \in \NN^{\emb}}$ such that if $g \in \Gamma_m$, then $g (w) = \sum_{\kbf \in \NN^{\emb}} \ell(g)^\kbf w_\kbf$, where $\ell(g)^\kbf = \prod_{\tau \in \emb} \tau \circ \ell(g)^{k_\tau}$. We have $\nabla_\tau(w) = w_{\unbf_\tau}$ where $\unbf_\tau$ is the $\emb$-tuple whose entries are $0$ except the $\tau$-th one which is $1$. If $\kbf \in\NN^{\emb}$, and if we set  $\nabla^\kbf(w) =  \prod_{\tau \in \emb} \nabla_{\tau}^{k_\tau} (w)$, then $w_\kbf = \nabla^\kbf(w)/\kbf!$. 

\begin{rema}
\label{flanab}
If $w \in W^{\la}$, then $w \in W^{\fla}$ if and only if $\nabla_\tau(w) = 0$ for all $\tau \in \emb \setminus \{ \Id \}$.
\end{rema}

\begin{lemm}
\label{hnabtau}
If $h \in \Gal(E/\Qp)$ acts on $E \otimes_{\Qp} \Lie(\Gamma_F)$, then $h(\nabla_\tau) = \nabla_{h \circ \tau}$. In particular, $\nabla_{\Id} \in F \otimes_{\Qp} \Lie(\Gamma_F)$.
\end{lemm}

\begin{proof}
Let $g_1,\hdots,g_d$ be elements of $\Gamma_K$ such that $\log g_1, \hdots, \log g_d$ generate $\Lie(\Gamma_F)$. Write $\nabla_\tau = \sum_{i=1}^d n_i^\tau \cdot \log g_i$ with $n_i^\tau \in E$. Applying the definition of $\nabla_\tau$ to $g=g_j$ we find that $\log g_j (w) = \sum_{\tau \in \emb}  \sum_{i=1}^d n_i^\tau \cdot \tau(\ell(g_j)) \cdot \log g_i$ so that $\sum_{\tau \in \emb}  \sum_{i=1}^d n_i^\tau \cdot \tau(\ell(g_j)) = \delta_{ij}$. These equations determine $n_i^\tau$. In particular, applying $h$ gives $\sum_{\tau \in \emb}  \sum_{i=1}^d h(n_i^\tau) \cdot h \circ \tau(\ell(g_j)) = \delta_{ij}$ so that $h(n_i^\tau) = n_i^{h \circ \tau}$. This implies the first assertion. The second follows from applying the first to all $h \in \Gal(E/F)$.
\end{proof}

\begin{lemm}\label{twistder}
Let $X$ and $Y$ be $E$-representations of $\Gamma_n$, take $g \in \Gal(E/\Qp)$, and let $f : X \to Y$ be a $\Gamma_n$-equivariant map such that $f(ax)=g^{-1}(a)f(x)$ for $a \in E$. If $x \in X^{\pa}$, then $\nabla_{\Id}(f(x)) = f( \nabla_{g{\mid_F}} (x))$.
\end{lemm}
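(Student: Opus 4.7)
The plan is to expand both sides of the identity $f(g(x)) = g(f(x))$ as Taylor-type series in the logarithmic coordinates $\ell(g)^{\kbf}$ on a sufficiently small $\Gamma_m$, and then compare the coefficients of $\ell(g)^{\unbf_{\Id}}$. Since $x \in X^{\pa}$, pick $m \geq n$ large enough so that on $\Gamma_m$ one has the convergent expansion $g(x) = \sum_{\kbf \in \NN^{\emb}} \ell(g)^\kbf x_\kbf$ with $x_\kbf \in X$. Applying $f$ termwise (legitimate since $f$ is continuous, as is implicit in the hypothesis) and invoking the twisted linearity $f(ax) = \tau^{-1}(a) f(x)$ yields
\[
f(g(x)) \;=\; \sum_{\kbf} \tau^{-1}(\ell(g)^\kbf)\, f(x_\kbf) \;=\; \sum_{\kbf} \Big( \prod_{\sigma \in \emb} (\tau^{-1}\sigma)(\ell(g))^{k_\sigma} \Big)\, f(x_\kbf).
\]
Reindexing the product via $\sigma \mapsto \tau\sigma$ rewrites this as $\sum_{\kbf} \ell(g)^{\kbf'}\, f(x_\kbf)$, where $\kbf \mapsto \kbf'$ is the bijection of $\NN^{\emb}$ defined by $k'_{\sigma} := k_{\tau\sigma}$.

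On the other hand, by $\Gamma_n$-equivariance of $f$ we have $f(g(x)) = g(f(x))$, and $f(x)$ inherits pro-analyticity from $x$ (its orbit map is the composition of $f$ with the orbit map of $x$), so $g(f(x)) = \sum_{\kbf'} \ell(g)^{\kbf'}\, f(x)_{\kbf'}$. Uniqueness of the locally analytic expansion then forces $f(x)_{\kbf'} = f(x_\kbf)$ for every pair $(\kbf,\kbf')$ related by the bijection above. Specialising to $\kbf' = \unbf_{\Id}$, the corresponding index is characterised by $k_\rho = 1 \iff \rho = \tau$, i.e.\ $\kbf = \unbf_\tau$, and the identity $\nabla_{\Id}(f(x)) = f(x)_{\unbf_{\Id}} = f(x_{\unbf_\tau}) = f(\nabla_\tau(x))$ drops out.

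The computation itself is short; the one point that deserves care is that the manipulation $\tau^{-1}(\ell(g)^\kbf) = \prod_\sigma (\tau^{-1}\sigma)(\ell(g))^{k_\sigma}$ is used monomial by monomial against the twisted $F$-linearity of $f$, so one needs $\sigma(\ell(g)) \in F$ for every $\sigma \in \emb$. This is precisely where the standing assumption from Section \ref{ltext} that $F/\Qp$ be Galois enters, ensuring $F$ is stable under every $\sigma \in \emb$. I expect that to be the only genuine subtlety; the rest is a careful tracking of how the permutation $\sigma \mapsto \tau\sigma$ acts on multi-indices on $\emb$.
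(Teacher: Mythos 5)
Your argument is correct: the paper states this lemma without proof, and your computation --- expanding $g(x)=\sum_{\kbf}\ell(g)^{\kbf}x_{\kbf}$ on a small $\Gamma_m$, pushing the twisted $F$-linearity of $f$ through monomial by monomial, reindexing by $\sigma\mapsto\tau\sigma$, and comparing the coefficients of $\ell(g)$ via uniqueness of the expansion --- is exactly the intended ``short computation''. You also correctly flag the two points that make it work: $F/\Qp$ is Galois (so $\sigma(\ell(g))\in F$ and the semilinearity can be applied termwise), and $f$ is additive and continuous so it passes through the convergent series; both are implicit in the paper's setup and hold for the maps $\theta\circ\iota_g$ to which the lemma is later applied.
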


The space $\hat{K}_\infty$ is a Banach representation of $\Gamma_K$ which is studied in \cite{STLAV}.

\begin{prop}
\label{nabnul}
We have $\nabla_{\Id} = 0$ on $(\hat{K}_\infty)^{\la}$, and $(\hat{K}_\infty)^{\fla} = K_\infty$.
\end{prop}

\begin{proof}
This is corollary 4.3 of \cite{STLAV}. However, \cite{STLAV} is written under the assumption that $F/\Qp$ is Galois, and we now prove the result without this assumption. By \S 6 of ibid., there is a nonzero element $\mathfrak{a} \in \Cp \otimes_{\Qp} \Lie(\Gamma_K)$ such that $\mathfrak{a} = 0$ on $(\hat{K}_\infty)^{\la}$. Write $\mathfrak{a} = \sum_{\tau \in \emb} a_{\tau} \nabla_\tau$. For each $\tau \in \emb \setminus \{ \Id \}$, proposition \ref{fouper} provides us with an element $x_\tau \in (\hat{K}_\infty)^{\la}$ such that $\nabla_\tau(x_\tau) = 1$ and $\nabla_\upsilon(x_\tau) = 0$ if $\tau \neq \upsilon$. Applying $\mathfrak{a}$ to $x_\tau$, we get $a_\tau = 0$ if $\tau \in \emb \setminus \{ \Id \}$. This implies that $\nabla_{\Id} = 0$ on $(\hat{K}_\infty)^{\la}$. The second assertion follows immediately.
\end{proof}

\section{Rings of $p$-adic periods}
\label{ltper}

In this section, we recall the definition of a number of rings of $p$-adic periods. These definitions can be found in \cite{F90,FPP} and \cite{LB2}, but we also use the Lubin-Tate generalization given for instance in \S\S 8,9 of  \cite{C02}. We therefore assume that $F_\infty$ is the Lubin-Tate extension of $F$ constructed above. Let $\etplus = \{ (x_0,x_1,\hdots)$, with $x_n \in \OO_{\Cp}/\pi$ and $x_{n+1}^q=x_n$ for all $n \geq 0\}$. This ring is endowed with the valuation $\ve(\cdot)$ defined by $\ve(x) = \lim_{n \to +\infty} q^n \vp(\hat{x}_n)$ where $\hat{x}_n \in \OO_{\Cp}$ lifts $x_n$. The ring $\etplus$ is complete for $\ve(\cdot)$. If the $\{u_n\}_{n \geq 0}$ are as in section \ref{ltext}, then $\ubar = (\ubar_0,\ubar_1,\hdots) \in \etplus$ and $\ve(\ubar) = q/(q-1)e$. Let $\et$ be the fraction field of $\etplus$.

Let $W_F(\cdot)$ denote the functor $\OO_F \otimes_{\OO_{F_0}} W(\cdot)$ of $F$-Witt vectors. Let $\atplus=W_F(\etplus)$ and let $\btplus=\atplus[1/\pi]$. These rings are preserved by the Frobenius map $\phi_q=\Id \otimes \phi^h$. Every element of $\btplus[1/[\ubar]]$ can be written as $\sum_{k \gg -\infty} \pi^k [x_k]$ where $\{x_k\}_{k \in \ZZ}$ is a bounded sequence of $\et$. If $r \geq 0$, define a valuation\footnote{This valuation is normalized as in \S 2 of \cite{LB2}. The valuation defined in \S 3 of \cite{PGMLT} is normalized differently (sorry), it is $pr/(p-1)$ times this one.}
$V(\cdot,r)$ on $\btplus[1/[\ubar]]$ by 
\[ V(x,r) = \inf_{k \in \ZZ} \left(\frac{k}{e} + \frac{p-1}{pr} \ve(x_k)\right)  \text{ if } x = \sum_{k \gg -\infty} \pi^k [x_k]. \]
If $I$ is a closed subinterval of $[0;+\infty[$, then let $V(x,I) = \inf_{r \in I} V(x,r)$. The ring $\bt^I$ is defined to be the completion\footnote{When $F=\Qp$, the ring $\bt^I$ is the same as the one denoted by $\bt_I$ in \S 2.1 of \cite{LB2}.} of $\btplus[1/[\ubar]]$ for the valuation $V(\cdot,I)$ if $0 \notin I$. If $I=[0;r]$, then $\bt^I$ is the completion of $\btplus$ for $V(\cdot,I)$. Let $\at^I$ be the ring of integers of $\bt^I$ for $V(\cdot,I)$. 

If $k \geq 1$, then let $r_k=p^{kh-1}(p-1)$. The map $\theta \circ \phi_q^{-k} : \atplus \to \OO_{\Cp}$ extends by continuity to $\at^I$ provided that $r_k \in I$ and then $\theta \circ \phi_q^{-k}(\at^I) \subset \OO_{\Cp}$. By \S 9.2 of \cite{C02}, there exists $u \in \atplus$, whose image in $\etplus$ is $\ubar$, and such that $\phi_q(u) = [\pi](u)$ and $g(u) = [\chilt(g)](u)$ if $g \in \Gamma_F$. For $k \geq 0$, let $Q_k=Q_k(u) \in \atplus$. Let $\tilde{\pi} \in \etplus$ be a compatible sequence of $q^n$-th roots of $\pi$. The kernel of $\theta : \atplus \to \OO_{\Cp}$ is generated by $\phi_q^{-1}(Q_1)$ or by $[\tilde{\pi}]-\pi$ (see proposition 8.3 of \cite{C02}), so that $\phi_q^{-1}(Q_1) / ([\tilde{\pi}]-\pi)$ is a unit of $\atplus$ and therefore, $Q_k / ([\tilde{\pi}]^{q^k}-\pi)$ is a unit of $\atplus$ for all $k \geq 1$.

\begin{lemm}
\label{atzs}
If $y \in \at^{[0;r_k]}$, then there exists a sequence $\{a_i\}_{i \geq 0}$ of elements of $\atplus$, converging $p$-adically to $0$, such that $y=\sum_{i \geq 0} a_i \cdot (Q_k/\pi)^i$.
\end{lemm}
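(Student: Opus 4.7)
The plan is to construct the expansion by an inductive ``division with remainder'' procedure, using that $\omega := Q_k/\pi_F$ has uniform $V(\cdot,[0;r_k])$-valuation zero on the interval.

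The first step is a valuation computation. Using the factorization $Q_k = u \cdot ([\ubar]^{q^k} - \pi_F)$ for some unit $u \in (\atplus)^\times$ -- obtained by combining the principality of $\ker\theta$ recalled in the excerpt with the identity $Q_k = \phi_q^{k-1}(Q_1)$ -- together with $V(\pi_F, r) = 1/e$ and the strict inequality $V([\ubar]^{q^k}, r) > 1/e$ throughout $[0;r_k]$ (which is precisely how $r_k$ is designed), one finds $V([\ubar]^{q^k} - \pi_F, r) = V(\pi_F, r) = 1/e$, hence $V(Q_k, r) = 1/e$ and therefore $V(\omega, r) = 0$ uniformly for $r \in [0;r_k]$. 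The constancy of $V(\pi_F, \cdot)$ also shows that $\at^{[0;r_k]}$ is $\pi_F$-adically complete, and that a series $\sum_{i\geq 0} a_i \omega^i$ with $a_i \in \atplus$ converges in $\at^{[0;r_k]}$ for the $V$-topology if and only if $a_i \to 0$ $\pi_F$-adically (equivalently, $p$-adically).

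Given $y \in \at^{[0;r_k]}$, I would build the $a_i$ by induction: set $y_0 = y$ and, at each stage $i$, choose $a_i \in \atplus$ such that $y_i - a_i \in \omega \cdot \at^{[0;r_k]}$, then set $y_{i+1} := (y_i - a_i)/\omega \in \at^{[0;r_k]}$. The existence of $a_i$ rests on analyzing the quotient $\at^{[0;r_k]}/\omega \at^{[0;r_k]}$: this quotient receives a section from $\atplus$ (Teichmüller-style, via the image of $\atplus/\pi_F\atplus = \etplus$), which provides a canonical choice of $a_i$ with $V(a_i, r) \geq V(y_i, r)$ uniformly in $r$. The key remaining point is that $V(y_i, r)$ grows unboundedly in $i$ uniformly in $r$, which then forces $V(a_i, r) \to \infty$ and hence $a_i \to 0$ $\pi_F$-adically. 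This growth is produced by the following mechanism: although $V(\omega, r) = 0$ means that division by $\omega$ alone does not improve $V$, whenever $y_i$ lies in $\omega \at^{[0;r_k]}$ modulo $\pi_F$ one may take $a_i \in \pi_F \atplus$ and thereby gain $1/e$ in $V$; iterating, each pair (division plus $\pi_F$-gain) contributes a strict positive amount to $V(y_i,\cdot)$, and this is where the specific valuation inequality $V([\ubar]^{q^k}, r) > V(\pi_F, r)$ at $r = r_k$ plays its decisive role.

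The main obstacle is the growth estimate $V(y_i, r) \to \infty$ uniformly in $r \in [0; r_k]$, because $V(\omega, \cdot) = 0$ gives no direct gain under division; the argument has to extract the improvement from the joint structure of $(\omega)$ and the $\pi_F$-adic filtration on $\at^{[0;r_k]}$, and to verify that the Teichmüller-style lift of the class of $y_i$ modulo $\omega$ can be arranged in $\atplus$ at each step. Once this is in hand, the remaining verification that $\sum a_i \omega^i$ converges to $y$ in the $V$-topology is routine from the preparatory computation.
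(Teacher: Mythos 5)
Your proposal does not close the essential difficulty, and its two pillars are respectively unproved and unobtainable as described. (i) The existence at each step of $a_i \in \atplus$ with $y_i - a_i \in (Q_k/\pi_F)\at^{[0;r_k]}$ and $V(a_i,r)\geq V(y_i,r)$ amounts to knowing that $\ker(\theta\circ\phi_q^{-k})$ in the \emph{completed} ring $\at^{[0;r_k]}$ is generated by $Q_k/\pi_F$, with a valuation-controlled section of $\theta\circ\phi_q^{-k}$ landing in $\atplus$. This is the $[0;r_k]$-analogue of item (1) of lemma \ref{propatrs}, a statement of essentially the same depth as lemma \ref{atzs} itself; indeed the natural proof of that kernel statement \emph{uses} the explicit description you are trying to establish (given lemma \ref{atzs}, if $\theta\circ\phi_q^{-k}(\sum a_i(Q_k/\pi_F)^i)=0$ then $a_0\in Q_k\atplus$ and one factors out $Q_k/\pi_F$), so your route is close to circular. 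Your ``Teichm\"uller-style section via $\atplus/\pi_F\atplus=\etplus$'' does not address this: the quotient of $\at^{[0;r_k]}$ by $Q_k/\pi_F$ is $\OO_{\Cp}$-like, not $\etplus$-like. (ii) More seriously, nothing in the scheme produces the decay of the $a_i$: since $V(Q_k/\pi_F,\cdot)\equiv 0$, each division step at best preserves valuations, and with your normalization $V(a_i,r)\geq V(y_i,r)$ one only gets $V(y_{i+1},r)\geq V(y_i,r)$; the growth $V(y_i,\cdot)\to\infty$, which you yourself flag as the main obstacle, therefore has no source, and the suggested ``gain of $1/e$ whenever $y_i$ lies in $(Q_k/\pi_F)$ modulo $\pi_F$'' is not an argument (proposition \ref{liftdivp} in the paper shows how delicate this interplay of the $(Q_k/\pi_F)$-adic and $\pi_F$-adic filtrations actually is). As it stands the procedure only yields $y\in a_0+\cdots+a_{j-1}(Q_k/\pi_F)^{j-1}+(Q_k/\pi_F)^j\at^{[0;r_k]}$, with no convergence statement.

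For comparison, the paper's proof is the adaptation to general $F$ of \S 2.1 of \cite{LB2}, and it involves no successive division. For $x\in\btplus$ one has $V(x,[0;r_k])=V(x,r_k)$ (the Teichm\"uller components lie in $\etplus$), so if $x=\sum_{j\geq -N}\pi_F^j[x_j]$ satisfies $V(x,[0;r_k])\geq 0$ then $\ve(x_j)\geq |j|q^k/e$ for $j<0$; writing $[\tilde{\pi}_F]^{q^k}/\pi_F=1+u^{-1}\cdot Q_k/\pi_F$ with $u=Q_k/([\tilde{\pi}_F]^{q^k}-\pi_F)\in(\atplus)^\times$, each negative-index term equals $[x_j\tilde{\pi}_F^{\,jq^k}]\cdot([\tilde{\pi}_F]^{q^k}/\pi_F)^{-j}\in\atplus[Q_k/\pi_F]$, and one then passes to $\at^{[0;r_k]}$ by $p$-adic approximation (using $V(p,\cdot)=1$), which is exactly where the condition $a_i\to 0$ $p$-adically comes from. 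Note finally that your factorization is incorrect: $Q_k$ is an associate in $\atplus$ of $[\tilde{\pi}_F]^{q^k}-\pi_F$, not of $[\ubar]^{q^k}-\pi_F$; indeed $\theta\circ\phi_q^{-k}([\ubar]^{q^k})=\theta([\ubar])\neq\pi_F$, and modulo $\pi_F$ one has $Q_k\equiv\ubar^{q^{k-1}(q-1)}$, not $\ubar^{q^k}$. The conclusion $V(Q_k/\pi_F,r)=0$ on $[0;r_k]$ survives, but with the correct element one has $V([\tilde{\pi}_F]^{q^k},r_k)=V(\pi_F,r_k)=1/e$, so the strict endpoint inequality you describe as ``precisely how $r_k$ is designed'' is not available; the equality $V(Q_k,r_k)=1/e$ is instead read off from the degree-zero Teichm\"uller component of $[\tilde{\pi}_F]^{q^k}-\pi_F$.
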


\begin{proof}
Let $F=\Qp$ and $\pi=p$. In \S 2.1 of \cite{LB2}, it is proved that we can write $y=\sum_{i \geq 0} b_i \cdot ([\tilde{\pi}]^{q^k}/\pi)^i$ with $b_i \in \atplus$ and $b_i \to 0$. We can therefore write $y=\sum_{i \geq 0} a_i \cdot (([\tilde{\pi}]^{q^k}-\pi)/\pi)^i$ with $a_i \in \atplus$ and $a_i \to 0$. The lemma  follows from the fact recalled above that $Q_k / ([\tilde{\pi}]^{q^k}-\pi)$ is a unit of $\atplus$. The proof for general $F$ and $\pi$ is similar. 
\end{proof}

\begin{lemm}\label{propatrs}
Let $r=r_\ell$ and $s=r_k$, with $1 \leq \ell \leq k$.
\begin{enumerate}
\item\label{it1}  $\theta \circ \phi_q^{-k}(\at^{[r;s]}) = \OO_{\Cp}$ and $\ker(\theta \circ \phi_q^{-k} : \at^{[r;s]} \to \OO_{\Cp}) = (Q_k/\pi) \cdot \at^{[r;s]}$;
\item\label{it2} $\pi \at^{[r;s]} \cap (Q_k/\pi) \cdot \at^{[r;s]} = Q_k \cdot \at^{[r;s]}$;
\item\label{it3} $\pi \at^{[r;s]} \cap \at^{[0;s]} = \pi \at^{[0;s]}$.
\end{enumerate}
\end{lemm}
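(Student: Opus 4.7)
My plan is to adapt the cyclotomic arguments of \S 2 of \cite{LB2} to the Lubin-Tate setting; as with lemma \ref{atzs}, substituting $Q_k$ for the cyclotomic polynomial and $q=p^h$ for $p$ is mechanical, and I treat the three statements in order, with the main work concentrated in \eqref{it1}.

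For \eqref{it1}, surjectivity of $\theta\circ\phi_q^{-k}$ onto $\OO_{\Cp}$ is immediate from the inclusion $\atplus\hookrightarrow\at^{[r;s]}$ and the known surjectivity on $\atplus$. Containment of $(Q_k/\pi_F)\at^{[r;s]}$ in the kernel follows from $\theta(\phi_q^{-k}(Q_k))=Q_k(u_k)=0$. The reverse inclusion is the main point: I would show that $(Q_k/\pi_F)$ generates a maximal ideal of $\bt^{[r;s]}$ with residue field $\Cp$, the projection to the residue field being the natural extension of $\theta\circ\phi_q^{-k}$. This hinges on the Weierstrass-type analysis of $Q_k$ on the closed annulus associated with $[r;s]$: $Q_k$ has exactly one ``zero'' there, at the point corresponding to $u\mapsto u_k$ (which lies in the interval since $r_k=s\in[r;s]$), with residue field $\Cp$. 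Having identified the maximal ideal, the reverse inclusion in $\bt^{[r;s]}$ follows; integrality, i.e., that the induced surjection $\at^{[r;s]}/(Q_k/\pi_F)\at^{[r;s]}\twoheadrightarrow\OO_{\Cp}$ is injective, then comes from this being a map of valuation-ring quotients.

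For \eqref{it2}, the inclusion $\supset$ is clear from $Q_k=\pi_F\cdot(Q_k/\pi_F)$. For the reverse, given $(Q_k/\pi_F)\cdot b=\pi_F\cdot a$ with $a,b\in\at^{[r;s]}$, it suffices to show that $Q_k/\pi_F$ is a non-zero-divisor modulo $\pi_F$: for then $b\in\pi_F\at^{[r;s]}$, and writing $b=\pi_F b'$ gives $\pi_F a=Q_k b'\in Q_k\at^{[r;s]}$. Using the factorisation $Q_k=v\cdot([\tilde{\pi}_F]^{q^k}-\pi_F)$ with $v\in\atplus$ a unit, this reduces to the analogous statement for $[\tilde{\pi}_F]^{q^k}$ in $\at^{[r;s]}/\pi_F$; but the latter quotient identifies with a Hausdorff completion of $\etplus[1/\ubar]$ for the valuation induced by $V(\cdot,[r;s])$, which is an integral domain in which the class of $[\tilde{\pi}_F]$ is non-zero.

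For \eqref{it3}, the inclusion $\supset$ is immediate. The reverse reduces to injectivity of the natural map $\at^{[0;s]}/\pi_F\to\at^{[r;s]}/\pi_F$: identifying these quotients with the Hausdorff completions of $\etplus$ and $\etplus[1/\ubar]$ respectively for the valuations induced by $V(\cdot,[0;s])$ and $V(\cdot,[r;s])$, the claim follows from the injection $\etplus\hookrightarrow\etplus[1/\ubar]$. The main obstacle throughout is the Weierstrass-type structural analysis of $\bt^{[r;s]}$ used in \eqref{it1}; once it is available, the remaining parts are straightforward reductions to well-known properties of $\etplus$ and its localisations.
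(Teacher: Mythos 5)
The decisive problem is the structural claim on which your proofs of \eqref{it2} and \eqref{it3} rest, namely that $\at^{[r;s]}/\pi_F\at^{[r;s]}$ is the Hausdorff completion of $\etplus[1/\ubar]$ for the valuation induced by $V(\cdot,[r;s])$, and in particular an integral domain. This is false whenever $\min(I)>0$: since $V(\pi_F,r')=1/e>0$ on $[r;s]$, the quotient has nonzero nilpotents. Concretely, take $x=[\ubar^{1/q^2}]\in\atplus$ (recall $\etplus$ is perfect). Then $V(x/\pi_F,s)=\tfrac{1}{q^{k+1}(q-1)e}-\tfrac1e<0$, so $x\notin\pi_F\at^{[r;s]}$, whereas $V(x^N,r')$ attains its minimum over $[r;s]$ at $r'=s$ and is $\geq 1/e$ there as soon as $N\geq q^{k+1}(q-1)$; hence $\bar x\neq 0$ but $\bar x^N=0$ in $\at^{[r;s]}/\pi_F$. (Similarly $[\ubar]\cdot(\pi_F/[\ubar])=\pi_F$ gives a zero-divisor pair for $k\geq 2$, and $\ubar$ is not invertible in the quotient, since $\pi_F/[\ubar]\in\at^{[r;s]}$ is a nonzero class killed by $[\ubar]$ --- so the quotient cannot be any completion of $\etplus[1/\ubar]$.) Consequently the reductions you propose for \eqref{it2} and \eqref{it3} do not go through. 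There is also a slip in \eqref{it2}: modulo $\pi_F$, the element $Q_k/\pi_F$ is a unit times $[\tilde{\pi}_F]^{q^k}/\pi_F-1$, not a unit times $[\tilde{\pi}_F]^{q^k}$, so even granting a domain structure you would be reducing to the wrong element; and the property you aim for (``$Q_k/\pi_F$ is a non-zero-divisor mod $\pi_F$'') is essentially a restatement of \eqref{it2} itself, so once the false domain claim is removed the argument is nearly circular.

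For comparison, the paper obtains \eqref{it2} and \eqref{it3} much more cheaply, with no description of $\at^{[r;s]}/\pi_F$ at all: for \eqref{it2}, if $\pi_F x\in (Q_k/\pi_F)\at^{[r;s]}\subset\ker(\theta\circ\phi_q^{-k})$, then $\theta\circ\phi_q^{-k}(x)$ is $\pi_F$-torsion in $\OO_{\Cp}$, hence zero, so by \eqref{it1} $x\in(Q_k/\pi_F)\at^{[r;s]}$ and $\pi_F x\in Q_k\at^{[r;s]}$; for \eqref{it3}, if $x\in\at^{[r;s]}$ and $\pi_F x\in\at^{[0;s]}$, then $x\in\bt^{[0;s]}$ and $V(x,s)\geq V(x,[r;s])\geq 0$, so $x\in\at^{[0;s]}$. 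Your treatment of \eqref{it1} is in substance the paper's, which invokes the generalization of \S 2.2 of \cite{LB2} together with the fact that $Q_k/([\tilde{\pi}_F]^{q^k}-\pi_F)$ is a unit of $\atplus$; but note that your final ``integrality'' step is not a formal property of quotients of valuation rings (as the computation above shows, these quotients are badly non-reduced) --- it is precisely the delicate point where the Weierstrass/Newton-polygon analysis is needed, and where the paper flags a necessary correction to proposition 2.11 of \cite{LB2}. So \eqref{it1} is acceptable as a sketch deferring to \emph{loc.\ cit.}, but \eqref{it2} and \eqref{it3} are not proved as written.
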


\begin{proof}
Item (\ref{it1}) follows from the straightforward generalization of \S 2.2 of \cite{LB2} from $\Qp$ to $F$ (note that proposition 2.11 of ibid.\ is only correct if the element $[\tilde{p}]/p-1$ actually belongs to $\at^I$) and the fact that $Q_k / ([\tilde{\pi}]^{q^k}-\pi)$ is a unit of $\atplus$. If $x \in \at^{[r;s]}$ and $\pi x \in \ker(\theta \circ \phi_q^{-k})$, then $x \in \ker(\theta \circ \phi_q^{-k})$ and this together with (\ref{it1}) implies (\ref{it2}). Finally, if $x \in \at^{[r;s]}$ is such that $\pi x \in \at^{[0;s]}$, then $x \in \bt^{[0;s]}$ and $V(x,s) \geq V(x,[r;s]) \geq 0$, so that $x \in \at^{[0;s]}$ and $\pi x \in \pi \at^{[0;s]}$.
\end{proof}

\begin{prop}\label{liftdivp}
If $y \in \at^{[0;s]} + \pi \cdot \at^{[r;s]}$ and if $\{y_i\}_{i \geq 0}$ is a sequence of elements of $\atplus$ such that $y-\sum_{j=0}^{j-1} y_i \cdot (Q_k / \pi)^i$ belongs to $\ker(\theta \circ \varphi_q^{-k})^j$ for all $j \geq 1$, then there exists $j \geq 1$ such that $y-\sum_{i=0}^{j-1} y_i \cdot (Q_k / \pi)^i \in \pi \cdot \at^{[r;s]}$.
\end{prop}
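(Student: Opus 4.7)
The plan is to compare the given expansion of $y$ with a canonical $(Q_k/\pi_F)$-adic expansion of the $\at^{[0;s]}$-part of $y$ coming from Lemma \ref{atzs}, and then to invoke a uniqueness argument modulo $\pi_F$ to force the two expansions to agree enough that the error lies in $\pi_F \at^{[r;s]}$.

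First I would write $y = y' + \pi_F y''$ with $y' \in \at^{[0;s]}$ and $y'' \in \at^{[r;s]}$, and apply Lemma \ref{atzs} to $y'$ to obtain a sequence $\{a_i\}_{i \geq 0} \subset \atplus$ with $a_i \to 0$ $p$-adically such that $y' = \sum_{i \geq 0} a_i \cdot (Q_k/\pi_F)^i$ in $\at^{[0;s]}$. Choosing $N$ so that $a_i \in \pi_F \atplus$ for all $i \geq N$, the tail $\sum_{i \geq N} a_i (Q_k/\pi_F)^i$ lies in $\pi_F \at^{[0;s]}$, so that
\[ y - \sum_{i=0}^{N-1} a_i \cdot (Q_k/\pi_F)^i \in \pi_F \cdot \at^{[r;s]}. \]
The proposition then follows with $j = N$ provided I can show $y_i \equiv a_i \pmod{\pi_F \atplus}$ for all $i < N$: for then $\sum_{i=0}^{N-1}(y_i - a_i)(Q_k/\pi_F)^i \in \pi_F \at^{[0;s]}$, and combining with the containment above gives the desired conclusion.

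To establish this congruence, I would reduce modulo $\pi_F \at^{[r;s]}$ and work in $\widetilde{\efont}^{[r;s]} := \at^{[r;s]}/\pi_F \at^{[r;s]}$. By Lemma \ref{propatrs}(\ref{it3}) the natural map $\at^{[0;s]}/\pi_F \at^{[0;s]} \hookrightarrow \widetilde{\efont}^{[r;s]}$ is injective, and $\etplus$ sits inside $\widetilde{\efont}^{[r;s]}$ via Teichm\"uller. The image $\bar{y}$ therefore admits two $(\overline{Q_k/\pi_F})$-adic expansions with coefficients in $\etplus$, namely the finite polynomial $\sum_{i<N} \bar{a}_i (\overline{Q_k/\pi_F})^i$ from Lemma \ref{atzs} and the given one $\{\bar{y}_i\}$; their difference is a $(\overline{Q_k/\pi_F})$-adic expansion of zero. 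By Lemma \ref{propatrs}(\ref{it1}) reduced modulo $\pi_F$, the induced map $\bar\theta \circ \phi_q^{-k} : \widetilde{\efont}^{[r;s]} \to \OO_{\Cp}/\pi_F$ has kernel $(\overline{Q_k/\pi_F})$, and its restriction to $\etplus$ is the map $(c_0, c_1, \hdots) \mapsto c_k$, which is injective since $\OO_{\Cp}/\pi_F$ is perfect and so each component of a sequence in $\etplus$ is determined by any one of them. Applying $\bar\theta \circ \phi_q^{-k}$ kills the higher-order terms of the difference and forces the constant term in $\etplus$ to vanish; dividing by the non-zero-divisor $\overline{Q_k/\pi_F}$ and iterating shows that every coefficient of the difference vanishes, giving the required congruence.

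The main obstacle is verifying that $\overline{Q_k/\pi_F}$ is a non-zero-divisor in $\widetilde{\efont}^{[r;s]}$. That it is nonzero is immediate from $V(Q_k, [r;s]) = 1/e$, which gives $Q_k/\pi_F \notin \pi_F \at^{[r;s]}$. The non-zero-divisor property itself follows from $\widetilde{\efont}^{[r;s]}$ being a domain, which is standard: it identifies with the ring of characteristic-$p$ analytic functions on the connected annulus $[r;s]$ in the variable $\overline{u}$. With this input in hand, the inductive uniqueness argument above goes through cleanly.
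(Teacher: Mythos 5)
Your first step (writing $y=y'+\pi_F y''$, expanding $y'$ by lemma \ref{atzs} and truncating so that $y-\sum_{i<N}a_i(Q_k/\pi_F)^i\in\pi_F\at^{[r;s]}$) is fine and is exactly how the paper starts. The gap is in the uniqueness claim: it is \emph{not} true that $y_i\equiv a_i \pmod{\pi_F\atplus}$, and the injectivity you invoke to prove it is false. The map induced by $\theta\circ\phi_q^{-k}$ on $\etplus=\atplus/\pi_F\atplus$ is indeed $x\mapsto x_k$, but this map is not injective: $\OO_{\Cp}/\pi_F$ is not perfect (Frobenius is not injective there), so $x_k$ does not determine the components $x_n$ for $n>k$; concretely its kernel is $\{x\in\etplus : \ve(x)\geq q^k/e\}$, which contains $\overline{Q_k}=\ubar^{q^{k-1}(q-1)}\neq 0$ (indeed $\theta\circ\phi_q^{-k}(Q_k)=Q_k(u_k)=0$ while $Q_k\notin\pi_F\atplus$). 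This is not a repairable technicality in your scheme: the coefficients of a $(Q_k/\pi_F)$-adic expansion are only well defined modulo the ideal $(\pi_F,Q_k)$ of $\atplus$, not modulo $\pi_F$. For instance, for $y=Q_k/\pi_F$ both $(y_0,y_1,y_2,\dots)=(0,1,0,\dots)$ and $(Q_k,\,1-\pi_F,\,0,\dots)$ satisfy the hypothesis of the proposition, and $Q_k\not\equiv 0\pmod{\pi_F\atplus}$; so the congruence you want to force simply fails, and your argument breaks at the step ``forces the constant term in $\etplus$ to vanish''.

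The statement survives because the ambiguity $Q_k c$ in a coefficient is harmless: $Q_k c\cdot(Q_k/\pi_F)^i=\pi_F c\,(Q_k/\pi_F)^{i+1}\in\pi_F\at^{[r;s]}$. This is what the paper's proof exploits: starting from your display (A), it shows inductively that $a_f=y_f+Q_kc_f+\pi_Fd_f$ for some $c_f,d_f\in\atplus$, so that each $a_f$ may be replaced by $y_f$ without leaving $\pi_F\at^{[r;s]}$; the induction needs the identity $\pi_F\at^{[r;s]}\cap(Q_k/\pi_F)^f\at^{[r;s]}=\pi_F(Q_k/\pi_F)^f\at^{[r;s]}$, obtained by iterating item (\ref{it2}) of lemma \ref{propatrs}, to peel off the factor $(Q_k/\pi_F)^f$ before applying $\theta\circ\phi_q^{-k}$. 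If you replace your ``congruence mod $\pi_F$'' by ``congruence mod $(\pi_F,Q_k)$'' and run this induction, your approach becomes the paper's proof; as written, the key step is wrong.
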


\begin{proof}
By lemma \ref{atzs}, there exist $j \geq 1$ and $a_0,\hdots,a_{j-1}$ of $\atplus$ such that
\begin{equation}\label{dif}\tag{A}
y - \left(a_0 + a_1 \cdot (Q_k/\pi) + \cdots + a_{j-1} \cdot (Q_k/\pi)^{j-1}\right) \in \pi \at^{[r;s]}.
\end{equation}
We have $a_0, y_0 \in \atplus$ and $\theta \circ \phi_q^{-k}(y_0 - a_0) \in \pi \OO_{\Cp}$ by the above, so there exists $c_0,d_0 \in \atplus$ such that $a_0 = y_0 + Q_k c_0 + \pi d_0$. In particular, \eqref{dif} holds if we replace $a_0$ by $y_0$. Assume now that $f \leq j-1$ is such that \eqref{dif} holds if we replace $a_i$ by $y_i$ for $i \leq f-1$. The element
\begin{multline*} 
\left(a_0 + a_1 \cdot (Q_k/\pi) + \cdots + a_{j-1} \cdot (Q_k/\pi)^{j-1}\right)  \\ - \left(y_0 + y_1 \cdot (Q_k/\pi) + \cdots + y_{j-1} \cdot (Q_k/\pi)^{j-1}\right)
\end{multline*}
belongs to $\pi \at^{[r;s]} + (Q_k/\pi)^j  \at^{[r;s]}$. If $a_i = y_i$ for $i \leq f-1$, then the element
\begin{multline*}
\left(a_f + a_{f + 1} \cdot (Q_k/\pi) + \cdots + a_{j-1} \cdot (Q_k/\pi)^{j-1-f}\right) \\ - \left(y_f + y_{f + 1} \cdot (Q_k/\pi) + \cdots + y_{j-1} \cdot (Q_k/\pi)^{j-1-f}\right)
\end{multline*}
belongs to $\pi \at^{[r;s]} + (Q_k/\pi)^{j-f}  \at^{[r;s]}$ since $\pi \at^{[r;s]} \cap (Q_k/\pi)^f  \at^{[r;s]} = \pi (Q_k/\pi)^f  \at^{[r;s]}$ by repeatedly applying (\ref{it2}) of lemma \ref{propatrs}. We have $a_f,y_f \in \atplus$ and the above implies that $\theta \circ \phi_q^{-k}(y_f - a_f) \in \pi \OO_{\Cp}$. There exist therefore $c_f,d_f \in \atplus$ such that $a_f = y_f + Q_k c_f + \pi d_f$ which shows that \eqref{dif} holds if we also replace $a_f$ by $y_f$. This shows by induction on $f$ that $y - (y_0 + y_1 \cdot (Q_k/\pi) + \cdots + y_{j-1} \cdot (Q_k/\pi)^{j-1})$ belongs to $\pi \at^{[r;s]}$, which proves the proposition.
\end{proof}

\begin{lemm}\label{ber31}
If $r>1$, then $u/[\ubar]$ is a unit of $\at^{[r;+\infty]}$.
\end{lemm}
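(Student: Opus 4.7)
The plan is to write $u/[\ubar]=1+\epsilon$ for some $\epsilon\in\atdag{,r}$ that is topologically nilpotent for the family of valuations $V(\cdot,s)$ with $s\geq r$, so that the geometric series $\sum_{n\geq 0}(-\epsilon)^n$ supplies an inverse.

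First I would observe that both $u$ and $[\ubar]$ are lifts of $\ubar\in\etplus$ to $\atplus$: for the Teichm\"uller representative this is by construction, and for $u$ this is recalled just before the lemma (following \S\,9.2 of \cite{C02}). Since the kernel of the reduction $\atplus\to\etplus$ is $\pi_F\atplus$, one may write $u-[\ubar]=\pi_F w$ with $w\in\atplus$, so that, using multiplicativity of the Teichm\"uller lift,
\[u/[\ubar]=1+\pi_F w\cdot[\ubar^{-1}].\]

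Next I would bound $V(\pi_F w\cdot[\ubar^{-1}],s)$ for $s\geq r$. Using that $V(\cdot,s)$ is a valuation on $\bt^I$ together with the explicit formula from \S\,\ref{ltper}, the values $V(\pi_F,s)=1/e$ and $\ve(\ubar^{-1})=-q/((q-1)e)$, and $V(w,s)\geq 0$ (since $w\in\atplus$), one obtains
\[V(\pi_F w\cdot[\ubar^{-1}],s)\;\geq\;\frac{1}{e}\Bigl(1-\frac{(p-1)q}{ps(q-1)}\Bigr).\]
The elementary identity $p(q-1)-(p-1)q=q-p\geq 0$, which holds because $q=p^h\geq p$, shows that $(p-1)q/(p(q-1))\leq 1$. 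Consequently the hypothesis $r>1$ forces the bracketed quantity to be strictly positive for every $s\geq r$.

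Setting $\epsilon=\pi_F w\cdot[\ubar^{-1}]$, one has $V(\epsilon^n,s)\geq n\,V(\epsilon,s)\to+\infty$ for each $s\geq r$, so the series $\sum_{n\geq 0}(-\epsilon)^n$ converges in $\atdag{,r}$ to an inverse of $u/[\ubar]$. The only non-routine point is the numerical threshold: verifying that the inequality $(p-1)q/(p(q-1))<s$ needed for positivity of $V(\epsilon,s)$ is precisely ensured by $s\geq r>1$, which explains why the lemma is stated with exactly this bound. Everything else is a direct computation with the Witt-vector valuation of \S\,\ref{ltper} and with the fact that $\atdag{,r}$ is closed under limits in the Fr\'echet topology defined by the $V(\cdot,s)$ for $s\geq r$.
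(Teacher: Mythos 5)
Your proposal is correct and follows essentially the same route as the paper: you write $u=[\ubar]+\pi_F w$ with $w\in\atplus$ (the paper writes the same remainder as $\sum_{k\geq 1}\pi_F^k[v_k]$ with $v_k\in\etplus$), and the whole argument reduces, exactly as in the paper, to the positivity $V(\pi_F/[\ubar],s)>0$ for $s\geq r>1\geq (p-1)q/(p(q-1))$, after which the geometric series gives the inverse in $\atdag{,r}$.
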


\begin{proof}
We have $u = [\ubar] + \sum_{k\geq 1} \pi^k [v_k]$ with $v_k \in \etplus$ and the lemma follows from the fact that if $s \geq r > 1 \geq (p-1)/p \cdot q/(q-1)$, then $V(\pi/[\ubar],s) > 0$.
\end{proof}

For $\rho>0$, let $\rho'=\rho \cdot e \cdot p/(p-1) \cdot (q-1)/q$. Lemma \ref{ber31} and the fact that $\ve(\ubar)=q/(q-1)e$ imply that if $r>1$, then\footnote{compare with proposition 3.1 of \cite{PGMLT}, bearing in mind that our normalization of $V(\cdot,r)$ is different.}  $V(u^i,r) = i/r'$ for $i \in \ZZ$.

Let $I$ be either a subinterval of $]1;+\infty[$ or such that $0 \in I$, and let $f(Y) = \sum_{k \in \ZZ} a_k Y^k$ be a power series with $a_k \in F$ and such that $\vp(a_k)+k/\rho' \to +\infty$ when $|k| \to +\infty$ for all $\rho \in I$. The series $f(u)$ converges in $\bt^I$ and we let $\bfont_F^I$ denote the set of $f(u)$ where $f(Y)$ is as above. It is a subring of $\bt_F^I = (\bt^I)^{H_F}$, which is stable under the action of $\Gamma_F$. The Frobenius map gives rise to a map $\phi_q : \bfont_F^I \to \bfont_F^{qI}$. If $m \geq 0$, then $\phi_q^{-m}(\bfont_F^{q^m I}) \subset \bt_F^I$ and we let $\bfont_{F,m}^I = \phi_q^{-m}(\bfont_F^{q^m I})$ so that $\bfont_{F,m}^I \subset \bfont_{F,m+1}^I$ for all $m \geq 0$. For example, if we let $t_\pi = \log_{\LT}(u)$ then $t_\pi \in \bfont_F^{[0;+\infty[}$, and $\phi_q(t_\pi) = \pi t_\pi$ and $g(t_\pi) = \chilt(g) t_\pi$ if $g \in G_F$.

Let $\brig{,r}{,F}$ denote the ring $\bfont_F^{[r;+\infty[}$.  This is a subring of $\bt_F^{[r;s]}$ for all $s \geq r$. Let $\bdag{,r}_F$ denote the set of $f(u) \in \brig{,r}{,F}$ such that in addition $\{a_k\}_{k \in \ZZ}$ is a bounded sequence. Let $\bdag{}_F = \cup_{r \gg 0} \bdag{,r}_F$. This a Henselian field (cf.\  \S 2 of \cite{SM95}), whose residue field $\efont_F$ is isomorphic to $\FF_q \dpar{\overline{u}}$. Let $K$ be a finite extension of $F$. By the theory of the field of norms (see \cite{FW1,FW2} and \cite{WCN}), there corresponds to $K/F$ a separable extension $\efont_K / \efont_F$, of degree $[K_\infty:F_\infty]$. Since $\bdag{}_F$ is a Henselian field, there exists a finite unramified extension $\bdag{}_K/ \bdag{}_F$ of degree $f = [K_\infty:F_\infty]$ whose residue field is $\efont_K$ (cf.\  \S 3 of \cite{SM95}). There exist therefore $r(K) >0$ and elements $x_1,\hdots,x_f$ in $\bdag{,r(K)}_K$ such that $\bdag{,s}_K = \oplus_{i=1}^f \bdag{,s}_F \cdot x_i$ for all $s \geq r(K)$. If $r(K) \leq \min (I)$, then let $\bfont^I_K$ denote the completion of $\bdag{,r(K)}_K$ for $V(\cdot,I)$, so that $\bfont^I_K = \oplus_{i=1}^f \bfont^I_F \cdot x_i$. Let $\bfont^I_{K,m} = \phi_q^{-m}(\bfont^{q^m I}_K)$ and $\bfont^I_{K,\infty} = \cup_{m \geq 0} \bfont^I_{K,m}$ so that $\bfont^I_{K,m} \subset \bt_K^I = (\bt^I)^{H_K}$. 

Let $\brig{,r}{,K}$ denote the Fr\'echet completion of $\bdag{,r}_K$ for the valuations $\{V(\cdot,[r;s]) \}_{s \geq r}$. Let $\brig{,r}{,K,m} = \phi_q^{-m}(\brig{,q^m r}{,K})$ and $\brig{,r}{,K,\infty} = \cup_{m \geq 0} \brig{,r}{,K,m}$. We have $\brig{,r}{,K,\infty} \subset \bt_K^{[r;s]}$ for all $s \geq r$. Let $\btrig{,r}{}$ denote the Fr\'echet completion of $\btplus[1/[\ubar]]$ for the valuations $\{V(\cdot,[r;s]) \}_{s \geq r}$; $\btrig{,r}{}$ is a subring of $\bt^{[r;s]}$ for all $s \geq r$. Let $\btrig{}{} = \cup_{r \gg 0} \btrig{,r}{}$ and $\btrig{,r}{,K} = (\btrig{,r}{})^{H_K}$ and $\btrig{}{,K} = (\btrig{}{})^{H_K}$. Note that $\btrig{,r}{,K}$ contains $\brig{,r}{,K}$. We also write $\bt^{[r;+\infty[}_K$ instead of $\btrig{,r}{,K}$ and $\bfont^{[r;+\infty[}_K$ instead of $\brig{,r}{,K}$.

\section{Locally $F$-analytic vectors of $\btrig{}{,K}$}
\label{fansec}

In this section, we still assume that $K_\infty$ is the Lubin-Tate extension of $K$ constructed above, and we compute the pro-$F$-analytic vectors of $\btrig{}{,K}$. Recall that if $n \geq 1$, then we set $r_n=p^{nh-1}(p-1)$, so that $r'_n=q^{n-1}(q-1)\cdot e$. From now on, let $r=r_\ell$ and $s=r_k$, with $\ell \leq k$. Let $I=[r;s]$ with $r=r_\ell$ and $s=r_k$.

\begin{prop}\label{radexp}
If $f(Y) \in \OO_F[Y]$, then $\phi_q^{-m}(f(u)) \in (\bt_F^I)^{\Gamma_{m+k}\dan,\fla}$.
\end{prop}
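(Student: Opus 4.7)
The plan is a reduction to the case $f(Y)=Y$ followed by a direct power-series expansion and a valuation estimate.

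\textbf{Step 1: Reduction.} Since $\phi_q$ is $F$-linear and commutes with the $\Gamma_F$-action on $\atplus$, one has $\phi_q^{-m}(f(u)) = f(u_m)$ with $u_m := \phi_q^{-m}(u)$. By lemma \ref{ringlocan}, and the fact that $F$-analyticity is stable under products (Leibniz rule for the $\nabla_\tau$'s), the space $(\bt_F^I)^{\Gamma_{m+k}\dan,\fla}$ is a subring of $\bt_F^I$. So it suffices to treat the case $f(Y)=Y$, i.e.\ to show $u_m \in (\bt_F^I)^{\Gamma_{m+k}\dan,\fla}$.

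\textbf{Step 2: Explicit expansion.} Set $t_m := \log_{\LT}(u_m) = \pi_F^{-m} t_F$, where the last equality uses $\phi_q(t_F) = \pi_F t_F$. For $g \in \Gamma_{m+k}$, the relation $g(u) = [\chilt(g)](u)$ and the identity $[a](Y) = \exp_{\LT}(a \log_{\LT}(Y))$ yield
\[
g(u_m) = \exp_{\LT}\bigl(\chilt(g)\, t_m\bigr).
\]
Writing $\chilt(g) = 1+x$ with $x \in \pi_F^{m+k}\OO_F$ and Taylor-expanding $\exp_{\LT}$ at $t_m$:
\[
g(u_m) = \sum_{j\geq 0} x^j\, v_j, \qquad v_j := \frac{t_m^j}{j!}\exp_{\LT}^{(j)}(t_m).
\]
Since $\chilt(g) = \exp(\ell(g))$, we have $x = \exp(\ell(g))-1$, and the Stirling-number identity $(\exp(L)-1)^j = j!\sum_{n\geq j} S(n,j)\, L^n/n!$ rearranges the double sum into a series in $\ell(g)$:
\[
g(u_m) = \sum_{n\geq 0} \ell(g)^n\, w_n, \qquad w_n := \sum_{j=0}^n v_j \cdot \frac{j!\, S(n,j)}{n!}.
\]

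\textbf{Step 3: Convergence.} The final task is to verify that $\pi_F^{(m+k)n} w_n \to 0$ in $\bt_F^I$, which is exactly the defining condition of $F$-analyticity on $\Gamma_{m+k}$. This reduces to a valuation estimate combining: $V(\pi_F, r) = 1/e$ (so $\pi_F^{(m+k)n}$ contributes a gain of $(m+k)n/e$); $V(t_F, r) \geq 1/r'$ for $r > 1$ (so $t_m^j = \pi_F^{-mj} t_F^j$ loses $mj/e$ while gaining $j/r'$); $\vp(1/n!) \geq -(n-1)/(p-1)$; $S(n,j) \in \NN$ so $\vp(S(n,j)) \geq 0$; and denominator control on the coefficients of $\exp_{\LT}$, which grow like $\pi_F^{-n/(q-1)}$.

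\textbf{Main obstacle.} The technical heart, and the step I expect to be hardest, is making this estimate uniform in $r$ across the compact interval $I = [r_\ell;\, r_k]$. One must check that the gain $(m+k)n/e$ from $\pi_F^{(m+k)n}$ strictly dominates the combined losses coming from $\pi_F^{-mj}$, the factorial $n!$, and the $\pi_F$-denominators hidden in the high-order Taylor coefficients of $\exp_{\LT}^{(j)}(t_m)$, uniformly for $r \in [r_\ell;\, r_k]$. The calibration between the right endpoint $r_k$ and the subgroup $\Gamma_{m+k}$ is precisely what makes the series converge at the level demanded by the statement.
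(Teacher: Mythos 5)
Your overall framework (reduce to $f(Y)=Y$ via lemma \ref{ringlocan}, then exhibit $g\mapsto g(\phi_q^{-m}(u))$ as a series in $\ell(g)$ with the decay required for $F$-analyticity on $\Gamma_{m+k}$) is the right one, but Step 2 contains a genuine error and Step 3 defers exactly the estimate that fails in general. Writing $u_m=\phi_q^{-m}(u)$ and $t_m=\pi_F^{-m}t_F$ as you do, the identity $g(u_m)=\exp_{\LT}(\chilt(g)\,t_m)$ and the elements $v_j=\frac{t_m^j}{j!}\exp_{\LT}^{(j)}(t_m)$ cannot be made sense of in $\bt_F^I$: $\exp_{\LT}$ has a finite radius of convergence, while at the right endpoint $s=r_k$ of $I$ the valuation $V(t_m,s)=V(t_F,s)-m/e$ lies far below the convergence threshold, so the series defining $\exp_{\LT}^{(j)}(t_m)$ diverge and the Stirling rearrangement has nothing to rearrange. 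One can see the identity is actually false as an identity of convergent series: since $r_k\in I$, the map $\theta\circ\phi_q^{-k}$ is defined on $\bt^I$, and applied to $\exp_{\LT}(\chilt(g)t_m)$ it would give $\exp_{\LT}(0)=0$, whereas $\theta\circ\phi_q^{-k}(g(u_m))=g(u_{k+m})$ is a nonzero torsion point. The formal identity $[a](Y)=\exp_{\LT}(a\log_{\LT}(Y))$ only permits evaluation at points where the composite converges, and $u_m$ is not such a point precisely because $I$ reaches $r_k$.

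Even if you replace this by the honest expansion of $g(u_m)=[\chilt(g)](u_m)=\sum_{n\geq 0}c_n(\chilt(g))u_m^n$ in powers of $\ell(g)$, i.e.\ by bounding $\nabla^i/i!$ as your Step 3 envisages, the uniform estimate you call the ``main obstacle'' is the crux and does not close in general: the naive accounting of the denominators coming from $j!$, $n!$ and the coefficients of $\exp_{\LT}$ loses roughly $n/(p-1)$ plus $n/(e(q-1))$ against a net gain of only $kn/e$, and remark \ref{radold} of the paper records that precisely this route succeeded only under the hypothesis $e\leq p-1$ in an earlier version. The proof actually given avoids Taylor coefficients altogether: it expands the integral polynomials $c_n$ in de Shalit's Mahler-type basis $\{g_n\}$, and uses the sharp bound $w_{n,h}\leq n/q^h(q-1)$ on $\|g_n\|_{\LA_h(\OO_F)}$ together with $\|\phi_q^{-m}(u)^n\|_{r_k}=|\pi_F|^{n/q^{h-1}(q-1)}$ (with $h=k+m$) to get $\|g_n\|_{\LA_h(\OO_F)}\cdot\|\phi_q^{-m}(u)^n\|_s\to 0$, with no restriction on the ramification of $F$. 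To repair your argument you would need either to restrict to $e\leq p-1$ (weaker than the statement) or to import such a Mahler-basis estimate in place of the $\exp_{\LT}$/Stirling computation.
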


\begin{proof}
By lemma \ref{ringlocan}, it is enough to show that $\phi_q^{-m}(u) \in (\bt_F^I)^{\Gamma_{m+k}\dan,\fla}$. By Lubin-Tate theory, there exists a family $\{c_n(T)\}_{n \geq 0}$ of elements of $F[T]$ such that $[a](T) = \sum_{n \geq 0} c_n(a) \cdot T^n$ if $a \in \OO_F$ so that if $g \in \Gamma_F$, then 
\[ g(\phi_q^{-m}(u)) = \phi_q^{-m}(g(u)) = \phi_q^{-m}(\sum_{n \geq 0} c_n(\chilt(g)) \cdot u^n) = \sum_{n \geq 0} c_n(\chilt(g)) \cdot \phi_q^{-m}(u)^n. \]
The polynomials $c_n(T)$ are of degree at most $n$ and $c_n(\OO_F) \subset \OO_F$. 

For $n \geq 0$, let $n_0+n_1 q + \cdots + n_{f-1} q^{f-1}$ denote the representation of $n$ in base $q$. Let $h=k+m$, let $w_{n,h} = \sum_{i=h}^{f-1} n_i \cdot (q^{i-h} - 1)/(q-1)$ (the sum is equal to $0$ if it is empty). Recall that de Shalit has constructed in \cite{dSMB} a family of polynomials $\{ g_n(T) \}_{n \geq 0}$ with the following properties:
\begin{enumerate}
\item $g_n(T)$ belongs to $F[T]$, is of degree $n$, and $g_n(\OO_F) \subset \OO_F$ (\S 1.2 of \cite{dSMB});
\item the family $\{ g_n(T) \}_{n \geq 0}$ is a Mahler basis (\S 1.2 of ibid);
\item the elements $\{\pi^{w_{n,h}} g_n\}_{n \geq 0}$ form a Banach basis of the Banach space $\LA_h(\OO_F)$ of functions on $\OO_F$ that are analytic on closed disks of radius $|\pi|^h$ (\S\S 4.2-4.3 of ibid, see also \S 10 of \cite{YA63}).
\end{enumerate}

Recall that $g(\phi_q^{-m}(u)) = \sum_{n \geq 0} c_n(\chilt(g)) \cdot \phi_q^{-m}(u)^n$. In order to prove the proposition, we need to show that the map $\OO_F \to \bt_F^I$ given by $a \mapsto \sum_{n \geq 0} c_n(a) \cdot \phi_q^{-m}(u)^n$ is analytic on closed disks of radius $|\pi|^h$. Since $c_n(\OO_F) \subset \OO_F$ and the family $\{ g_n(T) \}_{n \geq 0}$ is a Mahler basis, there are elements $b_{n,i} \in \OO_F$ such that $c_n(T)=\sum_{i=0}^n b_{n,i} g_i(T)$. It is therefore enough to show that $\| g_n \|_{\LA_h(\OO_F)} \cdot \| \phi_q^{-m}(u)^n \|_I \to 0$ as $n \to +\infty$, so that
the series of functions $\sum_{n \geq 0} c_n(\cdot) \cdot \phi_q^{-m}(u)^n$ converges in $\LA_h(\OO_F,\bt^I_F)$. We have 
\[ w_{n,h} = \sum_{i=h}^{f-1} n_i  \frac{q^{i-h} - 1}{q-1} \leq \sum_{i=h}^{f-1} n_i  \frac{q^{i-h}}{q-1} \leq n \cdot \frac{1}{q^h(q-1)}. \]

Let $\|{\cdot}\|_s$ denote the norm on $\bt^I$ given by $\|x\|_s = p^{-V(x,s)}$. By the maximum modulus principle (corollary 2.20 of \cite{LB2}), we have $\|\phi_q^{-m}(u)\|_I = \|\phi_q^{-m}(u)\|_s$. We have $\| \phi_q^{-m}(u)^n \|_{r_k} = \| u^n \|_{r_{k+m}} = |\pi|^{n/q^{h-1}(q-1)}$ since $V(u^n,r_h) = n/(q^{h-1}(q-1)\cdot e)$. This implies that 
\[ \| g_n \|_{\LA_h(\OO_F)} \cdot \| \phi_q^{-m}(u)^n \|_s \leq |\pi|^{n \cdot \left(\frac{1}{q^{h-1}(q-1)}-\frac{1}{q^h(q-1)}\right)}, \]
so that $\| g_n \|_{\LA_h(\OO_F)} \cdot \| \phi_q^{-m}(u)^n \|_s \to 0$ as $n \to +\infty$.
\end{proof}

\begin{rema}
\label{radold}
In a previous version of this article, proposition \ref{radexp} was proved under the assumption that the ramification index of $F$ was at most $p-1$, by bounding the norm of $\nabla^i(f) / i!$ as $i \to +\infty$. I am grateful to Pierre Colmez for suggesting the above proof.
\end{rema}

Let $m_0 \geq 0$ be such that $t_\pi$ and $t_\pi/Q_k$ belong to $(\bt^I_F)^{\Gamma_{m_0}\dan,\fla}$.

\begin{lemm}\label{divfanat}
If $m\geq m_0$, $a \in \bt_F^I$ and $Q_k \cdot a \in (\bt^I_F)^{\Gamma_m\dan,\fla}$, then $a \in (\bt^I_F)^{\Gamma_m\dan,\fla}$.
\end{lemm}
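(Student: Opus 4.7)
My plan is to first reduce to the problem of dividing by $t_F$, and then to solve that division by induction together with a closed-image argument.

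For the reduction, since $(\bt_F^I)^{\Gamma_m\dan,\fla}$ is a ring by lemma~\ref{ringlocan}, and $t_F/Q_k$ lies in $(\bt_F^I)^{\Gamma_{m_0}\dan,\fla} \subset (\bt_F^I)^{\Gamma_m\dan,\fla}$ by hypothesis, the product $t_F \cdot a = (Q_k \cdot a)(t_F/Q_k)$ belongs to $(\bt_F^I)^{\Gamma_m\dan,\fla}$. So it suffices to prove: if $b := t_F\cdot a$ lies in $(\bt_F^I)^{\Gamma_m\dan,\fla}$ and $a \in \bt_F^I$, then $a \in (\bt_F^I)^{\Gamma_m\dan,\fla}$.

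Next I would exploit $g(t_F) = \chilt(g) t_F$ with $\chilt(g) = \exp_p(\ell(g))$. Writing the $F$-analytic expansion $g(b) = \sum_{i \geq 0} \ell(g)^i b_i$ and multiplying by $\chilt(g)^{-1} = \sum_{j \geq 0} (-\ell(g))^j/j!$, which has coefficients in $F$, I obtain
\[ t_F \cdot g(a) = g(b)/\chilt(g) = \sum_{n \geq 0} \ell(g)^n c_n, \quad c_n := \sum_{i+j=n} \frac{(-1)^j}{j!} b_i \in \bt_F^I, \]
with $\pi_F^{mn} c_n \to 0$ in $\bt_F^I$.

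The main step, and the principal obstacle, is to show that $c_n \in t_F \cdot \bt_F^I$ for every $n$, so that setting $a_n := c_n/t_F \in \bt_F^I$ gives the desired expansion $g(a) = \sum_n \ell(g)^n a_n$. I would argue by induction. The case $n=0$ is clear since $c_0 = b = t_F\cdot a$. For the inductive step, suppose $c_i = t_F a_i$ with $a_i \in \bt_F^I$ for $i<n$, and consider
\[ h(g) := g(a) - \sum_{i<n} \ell(g)^i a_i \in \bt_F^I, \]
which satisfies $t_F\cdot h(g) = \sum_{i \geq n} \ell(g)^i c_i$. As $g \to 1$, one checks $t_F \cdot h(g)/\ell(g)^n \to c_n$ in $\bt_F^I$.

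The crucial observation making the inductive step work is that multiplication by $t_F$ is a closed topological embedding of $\bt_F^I$ into itself. Indeed, for each $s \in I$ the seminorm $V(\cdot, s)$ is multiplicative, so $\|t_F x\|_s = \|t_F\|_s \cdot \|x\|_s$; since $t_F \in \bfont_F^{[0;+\infty[}$, the value $V(t_F,s)$ is a continuous function of $s$, hence bounded above on the compact interval $I$, giving a uniform lower bound $\|t_F\|_s \geq c > 0$ and therefore $\|x\|_I \leq c^{-1}\|t_F x\|_I$. Consequently $\{h(g)/\ell(g)^n\}$ is Cauchy in $\bt_F^I$, converges to some $a_n \in \bt_F^I$ satisfying $t_F a_n = c_n$, and the same estimate gives $\pi_F^{mn}a_n \to 0$. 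Dividing the identity $t_F \cdot g(a) = \sum_n \ell(g)^n t_F a_n$ by $t_F$ (which is injective) yields $g(a) = \sum_n \ell(g)^n a_n$, proving that $a \in (\bt_F^I)^{\Gamma_m\dan,\fla}$.
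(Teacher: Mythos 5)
Your proof is correct and follows essentially the same route as the paper: reduce via the ring property and the hypothesis on $t_F/Q_k$ to the statement that $t_F\cdot a$ is $F$-analytic, then untwist by $\chilt(g)^{-1}$ using $g(t_F)=\chilt(g)t_F$. The only difference is that you spell out, via the multiplicativity of $V(\cdot,s)$ and the resulting closed-image bound for multiplication by $t_F$, why the coefficients of the untwisted expansion are again divisible by $t_F$ in $\bt_F^I$ --- a point the paper's one-line proof leaves implicit.
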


\begin{proof}
We have $t_\pi a = t_\pi / Q_k \cdot Q_k \cdot a \in (\bt^I_F)^{\Gamma_m\dan,\fla}$. The derivatives of $t_\pi a$ are all divisible by $t_\pi$ since $\nabla(t_\pi x) = t_\pi x + t_\pi \nabla(x)$, so that we can write $g(t_\pi a) = \sum_{k \geq 0} \ell(g)^k t_\pi a_k$ for $g \in \Gamma_m$. We have $g(a) = \chilt(g)^{-1} \cdot \sum_{k \geq 0} \ell(g)^k a_k$ so that $a \in (\bt^I_F)^{\Gamma_m\dan,\fla}$.
\end{proof}

\begin{theo}\label{xilocan}
Let $I=[r_\ell;r_k]$ with $\ell \leq k$, let $K$ be a finite extension of $F$, and let $m \geq 0$ be such that $t_\pi$ and $t_\pi/Q_k$ belong to $(\bt^I_F)^{\Gamma_{m+k}\dan,\fla}$.
\begin{enumerate}
\item $(\bt^I_F)^{\Gamma_{m+k}\dan,\fla} \subset \bfont^I_{F,m}$;
\item $(\bt^I_K)^{\fla} = \bfont^I_{K,\infty}$;
\item $(\btrig{,r_\ell}{,K})^{\fpa} = \brig{,r_\ell}{,K,\infty}$.
\end{enumerate}
\end{theo}

\begin{proof}
We first prove (1). Take $x \in (\bt^{[r;s]}_F)^{\Gamma_{m+k}\dan,\fla}$. If $d=q^{\ell-1}(q-1)$, then $\at^{[r;s]} =  \at^{[0;s]} \{ \pi/u^d \}$ (see corollary 2.2 of \cite{LB2} for the case $\pi=p$, the generalization is straightforward) so that for all $n \geq 1$, there exists $k_n \geq 0$ such that $(u^d/\pi)^{k_n} \cdot x \in \at^{[0;s]} + \pi^n \at^{[r;s]}$. If $x_n = (u^d/\pi)^{k_n} \cdot x$, then $x_n \in (\bt^{[r;s]}_F)^{\Gamma_{m+k}\dan,\fla}$, so that $\theta \circ \phi_q^{-k}(x_n) \in \OO_{\hat{F}_\infty}^{\Gamma_{m+k}\dan,\fla}$. By proposition \ref{nabnul}, $\hat{F}_\infty^{\fla}=F_\infty$ and therefore, $\OO_{\hat{F}_\infty}^{\Gamma_{m+k}\dan,\fla} = \OO_{F_{m+k}}$.

There exists $y_{n,0} \in \OO_F[\phi_q^{-m}(u)]$ such that $\theta \circ \phi_q^{-k}(x_n)= \theta \circ \phi_q^{-k}(y_{n,0})$. By (\ref{it1}) of lemma \ref{propatrs} and lemma \ref{divfanat}, there exists $x_{n,1} \in (\bt^{[r;s]}_F)^{\Gamma_{m+k}\dan,\fla} \cap \at^{[r;s]}$ such that $x_n-y_{n,0} = (Q_k/\pi) \cdot x_{n,1}$. Applying this procedure inductively gives us a sequence $\{y_{n,i}\}_{i \geq 0}$ of elements of $\OO_F[\phi_q^{-m}(u)]$ such that for all $j \geq 1$, we have
\[ x_n - \left( y_{n,0} + y_{n,1} \cdot (Q_k/\pi) + \cdots + y_{n,j-1} \cdot (Q_k/\pi)^{j-1} \right) \in \ker(\theta \circ \varphi_q^{-k})^j.  \] 
Proposition \ref{liftdivp} shows that there exists $j \gg 0$ such that
\[ x_n - \left( y_{n,0} + y_{n,1} \cdot (Q_k/\pi) + \cdots + y_{n,j-1} \cdot (Q_k/\pi)^{j-1} \right) \in \pi\at^{[r;s]},  \] 
and therefore belongs to $\pi(\at^{[0;s]} + \pi^{n-1} \at^{[r;s]})$, since $\pi\at^{[r;s]} \cap \at^{[0;s]} = \pi \at^{[0;s]}$ by (3) of lemma \ref{propatrs}. Write $x_n - ( y_{n,0} + y_{n,1} \cdot (Q_k/\pi) + \cdots + y_{n,j-1} \cdot (Q_k/\pi)^{j-1}) = \pi x'_n$ with $x'_n \in \at^{[0;s]} + \pi^{n-1} \at^{[r;s]}$. By proposition \ref{radexp}, we have $x'_n \in (\bt^{[r;s]}_F)^{\Gamma_{m+k}\dan,\fla}$. Applying to $x'_n$ the same procedure which we have applied to $x_n$, and proceeding inductively, we  find $j \gg 0$ and elements $\{ y_{n,i} \}_{i \leq j}$ of $\OO_F[\phi_q^{-m}(u)]$ such that if
\[ y_n = y_{n,0} + y_{n,1} \cdot (Q_k/\pi) + \cdots + y_{n,j-1} \cdot (Q_k/\pi)^{j-1}, \] 
then $y_n-x_n \in \pi^n \at^{[r;s]}$. If $z_n = (\pi/u^d)^{k_n} y_n$, then $z_n-x = (\pi/u^d)^{k_n} (y_n-x_n) \in \pi^n \at^{[r;s]}$ so that $\{z_n\}_{n \geq 1}$ converges $\pi$-adically to $x$, and $z_n \in \afont^{[r;s]}_{F,m}$ so that $x \in \afont^{[r;s]}_{F,m}$. 

This proves (1) when $K=F$. The action of $\Gamma_F$ on $\bfont_{F,m}^I$ is locally $F$-analytic, so that $\bfont_{F,\infty}^I \subset (\bt_F^I)^{\fla}$ and this finishes the proof of (1), and the proof of (2) for $K=F$.

We now consider (2) when $K$ is a finite extension of $F$. We first prove that $\bfont^I_{K,\infty} \subset (\bt^I_K)^{\fla}$. Since $\bfont^I_K = \oplus_{i=1}^f \bfont^I_F \cdot x_i$ as at the end of \S \ref{ltper}, each element of $\bfont^I_{K,\infty}$ is integral over $\bfont^I_{F,\infty}$. Take $x$ in $\bfont^I_{K,\infty}$ and let  $P(T) \in \bfont^I_{F,\infty}[T]$ denote its minimal polynomial over $\bfont^I_{F,\infty}$. If $g \in \Gamma_K$ is close enough to $1$, then $(gP)(gx)=0$ and the coefficients of $gP$ are analytic functions in $\ell(g)$. We also have $P'(x) \neq 0$, so that $x$ is locally $F$-analytic by the implicit function theorem for analytic functions (which follows from the inverse function theorem given on page 73 of \cite{SLALG}). This proves the first inclusion. We have $\bt^I_K = \oplus_{i=1}^f \bt^I_F \cdot x_i$, and the reverse inclusion now follows from proposition \ref{basan}, which implies that $(\bt^I_K)^{\fla} = \oplus_{i=1}^f (\bt^I_F)^{\fla} \cdot x_i$, and the case $K=F$.

We now prove (3). Suppose first that $K=F$. Let $r = r_\ell$ and $s \geq r$. Recall that in \S 2 of \cite{FX12}, a map $\psi_q : \bfont^{[r;s]}_F \to \bfont^{[r/q;s/q]}_F$ is constructed, which satisifes $\psi_q(\phi_q(x)) =x$. If $x \in (\btrig{,r}{,F})^{\fpa}$ and $s = r_k$, then item (2) implies that the image of $x$ in $\bt^{[r;s]}_F$ lies in $\bfont^{[r;s]}_{F,m}$ for some $m$. If $t = r_{k'} \geq s$, then there exists likewise $m(t) \geq m$ such that the image of $x$ in $\bt^{[r;t]}_F$ lies in $\bfont^{[r;t]}_{F,m(t)}$ so that $\phi_q^{m(t)}(x) \in \bfont^{[q^{m(t)} r;q^{m(t)} t]}_F$. We have $\phi_q^m(x) = \psi_q^{m(t)-m} \phi_q^{m(t)}(x) \in \bfont^{[q^m r;q^m t]}_F$ so that $x \in \bfont^{[r;t]}_{F,m}$ for all $t\geq s$. The theorem now follows from the fact that $\brig{,r}{,F,m} = \projlim_{s \geq r}  \bfont^{[r;s]}_{F,m}$. If $K \neq F$, then we can write $\btrig{,r}{,K} = \oplus_{i=1}^f \btrig{,r}{,F} \cdot x_i$ as above. The $x_i$'s belong to $\bdag{,r}_K$ and are hence pro-analytic. Item (3) now follows from proposition \ref{baspa}.
\end{proof}

\section{Rings of locally analytic periods}
\label{locanper}

In this section, we prove a number of results about the ring $(\bt^I_K)^{\la}$. In particular, we show that the elements of $(\bt^I_K)^{\la}$ can be written as power series with coefficients in $(\bt^I_K)^{\fla}$. This last result is not used in the rest of the article, but is the underlying motivation for several of our constructions. Let $K_\infty$ be the Lubin-Tate extension of $K$ constructed above. Given $\tau \in \emb$ and $f(Y) =\sum_{k \in \ZZ} a_k Y^k$ with $a_k \in F$, let $f^\tau(Y)=\sum_{k \in \ZZ} \tau(a_k) Y^k$. For $\tau \in \emb$, let $\tilde{n}(\tau)$ be the lift of $n(\tau) \in \ZZ/h\ZZ$ belonging to $\{0,\hdots,h-1\}$. Recall that $E$ is a finite extension of $F$ that contains $F^{\Gal}$ and that if $\tau \in \emb$, then we have $\nabla_\tau \in E \otimes_F \Lie(\Gamma_F)$. The field $E$ is a field of coefficients, so that $G_K$ acts $E$-linearly below.

Let $y_\tau = (\tau \otimes \phi^{\tilde{n}(\tau)}) (u) \in \OO_E \otimes_{\OO_F} \atplus$. We have $g(y_\tau) = [\chilt(g)]^\tau(y_\tau)$ and $\phi_q(y_\tau) = [\pi]^\tau (y_\tau) = \tau (\pi) y_\tau+y_\tau^q$. Let $t_\tau = (\tau \otimes \phi^{\tilde{n}(\tau)})(t_\pi) = \log_{\LT}^\tau(y_\tau)$. Recall that $\weil = \{ (\tau,n) \in \Gal(E/\Qp) \times \ZZ$ such that $n(\tau{\mid_F}) \equiv n \bmod{h}\}$. If $g \in \weil$ and $p^{n(g)-1}(p-1) \in I$, then we have a map $\iota_g : E \otimes_F \bt^I \to E \otimes_F \bdr^+$ given\footnote{recall that $\bt^I$ is a completion of $F \otimes_{\OO_{F_0}} W(\etplus)$ and that $n(g) \equiv n \bmod{h}$, so that $g = \phi^{n(g)}$ on $F_0$} by $x \mapsto (g^{-1} \otimes (g{\mid_F^{-1}} \otimes \phi^{-n(g)})) (x)$.

\begin{lemm}\label{kerim}
If $g \in \weil$ and $p^{n(g)-1}(p-1) \in I$, with $g{\mid_F} = \tau$ and $n(g)-\tilde{n}(\tau) = kh$, then $\ker (\theta \circ \iota_g : E \otimes_F \bt^I \to E \otimes_F \Cp) = Q_k^\tau(y_\tau) \cdot E \otimes_F \bt^I$.
\end{lemm}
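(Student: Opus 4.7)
The plan is to reduce the computation to Lemma \ref{propatrs}(\ref{it1}) via a twist by the ring automorphism $\sigma := \tau \otimes \phi^{\tilde{n}(\tau)}$ of $\atplus$. Since $n(g) = \tilde{n}(\tau) + kh$, the definition of $\iota_g$ rearranges as
\[ \iota_g = (\tau^{-1} \otimes \phi^{-\tilde{n}(\tau)}) \circ (\Id \otimes \phi^{-kh}) = \sigma^{-1} \circ \phi_q^{-k}, \]
and the two factors commute because they act on independent tensor factors of $\atplus = \OO_F \otimes_{\OO_{F_0}} W(\etplus)$. A direct check on the valuation (the twist $\tau$ does not affect $V(\cdot,r)$, and $\phi^{\tilde{n}(\tau)}$ scales its parameter by $p^{\tilde{n}(\tau)}$) shows that $\sigma$ descends to a bijective ring isomorphism $\sigma : \bt^{I/p^{\tilde{n}(\tau)}} \to \bt^I$. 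The hypothesis $p^{n(g)-1}(p-1) \in I$ is exactly $r_k \in I/p^{\tilde{n}(\tau)}$, which makes $\theta \circ \phi_q^{-k}$ well-defined on $\bt^{I/p^{\tilde{n}(\tau)}}$.

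Once this is set up, the equality $\theta \circ \iota_g = (\theta \circ \phi_q^{-k}) \circ \sigma^{-1}$ reduces the lemma to the identification
\[ \ker(\theta \circ \phi_q^{-k} : \bt^{I/p^{\tilde{n}(\tau)}} \to \Cp) = Q_k \cdot \bt^{I/p^{\tilde{n}(\tau)}}, \]
since $\sigma$ then transports this back to $\bt^I$ with $\sigma(Q_k(u)) = Q_k^\tau(\sigma(u)) = Q_k^\tau(y_\tau)$, directly from $y_\tau = (\tau \otimes \phi^{\tilde{n}(\tau)})(u)$ and the fact that $\sigma$ acts on the coefficients of $Q_k \in F[T]$ by $\tau$. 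The reverse inclusion is already immediate from this computation: $\iota_g(Q_k^\tau(y_\tau)) = Q_k(\phi_q^{-k}(u))$, and $\theta(\phi_q^{-k}(u)) = u_k$ is a root of $Q_k$ by definition.

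The remaining work, and the main technical point, is to derive the kernel description of $\theta \circ \phi_q^{-k}$ on $\bt^{I/p^{\tilde{n}(\tau)}}$ for a general closed interval. Lemma \ref{propatrs}(\ref{it1}) gives this over $\at^{[r_\ell;r_k]}$ with $1 \leq \ell \leq k$, and after inverting $\pi_F$ the generators $Q_k/\pi_F$ and $Q_k$ define the same principal ideal of $\bt^{[r_\ell;r_k]}$. I would then use the Fr\'echet presentation $\bt^J = \projlim_{[r_\ell;r_k] \subset J} \bt^{[r_\ell;r_k]}$, together with the compatibility of the kernels under the transition maps, to promote the description to the arbitrary closed interval $I/p^{\tilde{n}(\tau)}$. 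This is bookkeeping rather than a deep obstacle, and once it is in hand the lemma follows by the transport above.
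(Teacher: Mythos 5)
Your reduction is exactly the one the paper intends (its proof is the one-line ``follows from the definitions and (\ref{it1}) of lemma \ref{propatrs}''): writing $\iota_g=\sigma^{-1}\circ\phi_q^{-k}$ with $\sigma=\tau\otimes\phi^{\tilde{n}(\tau)}$, noting $\sigma(u)=y_\tau$, $\sigma(Q_k(u))=Q_k^\tau(y_\tau)$, and transporting the kernel description of $\theta\circ\phi_q^{-k}$ through $\sigma$. The containment $Q_k^\tau(y_\tau)\in\ker(\theta\circ\iota_g)$ also comes out of this transport directly, so you do not even need the identification $\theta(\phi_q^{-k}(u))=u_k$ (which, as stated, is a property of Colmez's element $u$ rather than a definition).

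The gap is in the step you yourself single out as the main technical point. For a closed interval $J$, $\bt^J$ is the completion of $\btplus[1/[\ubar]]$ for the single valuation $V(\cdot,J)$, a Banach-type ring, and the claimed presentation $\bt^J=\projlim_{[r_\ell;r_k]\subset J}\bt^{[r_\ell;r_k]}$ is not correct when the endpoints of $J$ are not among the $r_j$: the subintervals with endpoints in $\{r_j\}$ do not exhaust $J$, the projective limit over them is just $\bt^{[r_\ell;r_m]}$ for the largest such subinterval, and this ring is strictly larger than the image of $\bt^J$. This is precisely the situation your reduction creates: even if $I$ has endpoints $r_\ell,r_k$, the twisted interval $J=I/p^{\tilde{n}(\tau)}$ has endpoints $r_\ell/p^{\tilde{n}(\tau)}$, $r_k/p^{\tilde{n}(\tau)}$, which are not of the form $r_j$ once $\tilde{n}(\tau)\neq 0$, while the relevant point $r_k$ sits in the interior of $J$. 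Moreover, even granting compatible divisibility by $Q_k$ on all the special subintervals, this does not place the quotient in $\bt^J$: one needs a lower bound on $V(x/Q_k,\rho)$ for $\rho$ near the endpoints of $J$, to which the gluing gives no access. The repair is simpler than the gluing: the proof of (\ref{it1}) of lemma \ref{propatrs}, i.e.\ the generalization of \S 2.2 of \cite{LB2}, uses nothing about the endpoints being of the form $r_\ell,r_k$ beyond the requirement that $r_k$ lie in the interval, so one should state and prove the kernel description $\ker(\theta\circ\phi_q^{-k}:\bt^J\to\Cp)=Q_k\cdot\bt^J$ for an arbitrary closed $J\ni r_k$ at that level of generality, and then conclude by your twist. (A minor further caveat: your valuation check for $\sigma$ is only heuristic, since $\Id\otimes\phi^{\tilde{n}(\tau)}$ alone is not defined on $\OO_F\otimes_{\OO_{F_0}}W(\etplus)$; the semilinearity bookkeeping is the same standard fact that underlies the paper's definition of $\iota_g$ on $\bt^I$.)
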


\begin{proof}
This follows from the definitions and (\ref{it1}) of lemma \ref{propatrs}.
\end{proof}

We have $\nabla_\tau (y_\tau) = t_\tau \cdot v_\tau$ where $v_\tau = (\partial (T \oplus_{\LT} U)/ \partial U)^\tau(y_\tau,0)$ is a unit (see \S 2.1 of \cite{KR09}). Let $\partial_\tau = t_\tau^{-1} v_\tau^{-1} \nabla_\tau$ so that $\partial_\tau (y_\tau) = 1$. If $\tau,\upsilon \in \emb$, then $\partial_\tau \circ \partial_\upsilon = \partial_\upsilon \circ \partial_\tau$, and $\partial_\tau(y_\upsilon) = 0$ if $\tau \neq \upsilon$.

\begin{lemm}\label{partin}
We have $\partial_\tau((E \otimes_F \btrig{}{,K})^{\pa}) \subset (E \otimes_F \btrig{}{,K})^{\pa}$.
\end{lemm}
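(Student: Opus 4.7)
The plan is to exploit the defining equation $\partial_\tau = t_\tau^{-1} v_\tau^{-1} \nabla_\tau$ by checking separately that (i) $\nabla_\tau$ preserves pro-analytic vectors of $\btrig{}{,K}$, and (ii) the ``division by $t_\tau v_\tau$'' does not destroy pro-analyticity, because $t_\tau$ and $v_\tau$ satisfy twisted semi-invariance under analytic characters of $\Gamma_K$.

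First I would verify that $\nabla_\tau((\btrig{}{,K})^{\pa}) \subset (\btrig{}{,K})^{\pa}$. This is a general statement: for any Fr\'echet representation $W$ of $\Gamma_K$ and any $D \in F\otimes \Lie(\Gamma_K)$, the operator $D$ preserves $W^{\pa}$. In our setting it can be read off directly from the construction of $\nabla_\tau$ given just before Lemma~\ref{twistder}: if $w \in (\btrig{}{,K})^{\pa}$ and $g \in \Gamma_m$ is close enough to the identity, then $g(w) = \sum_{\kbf} \ell(g)^{\kbf} \nabla^{\kbf}(w)/\kbf!$ converges in each $\bt^{[r;s]}_K$, and shifting the index by $\unbf_\tau$ gives a locally analytic Taylor expansion of $g\mapsto g(\nabla_\tau w)$ with coefficients $\nabla^{\kbf + \unbf_\tau}(w)/\kbf!$ in $\bt^{[r;s]}_K$. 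Passing to the Fr\'echet limit gives $\nabla_\tau(w) \in (\btrig{}{,K})^{\pa}$.

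Second, I would use that $\partial_\tau$ is a genuine $F$-linear derivation of $\btrig{}{,K}$ (not just of a localization). This is checked on the topological generators $f(y_\upsilon)$ described at the end of \S\ref{ltper}, where by construction $\partial_\tau f(y_\upsilon) = \delta_{\tau,\upsilon}\, f'(y_\upsilon) \in \btrig{}{,K}$, and extended by Leibniz. In particular $\partial_\tau(w) \in \btrig{}{,K}$ for every $w \in \btrig{}{,K}$, so the identity $\nabla_\tau(w) = t_\tau v_\tau\, \partial_\tau(w)$ holds within $\btrig{}{,K}$.

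Finally I would combine the two: given $w \in (\btrig{}{,K})^{\pa}$, Step~1 says $\nabla_\tau(w) \in (\btrig{}{,K})^{\pa}$, and by Step~2 this element is divisible by $t_\tau v_\tau$ with quotient $\partial_\tau(w) \in \btrig{}{,K}$. For $g$ close to the identity one has $g(\nabla_\tau w) = \tau(\chilt(g))\, (g(v_\tau)/v_\tau)\, t_\tau v_\tau\, g(\partial_\tau w)$; the scalar factors $\tau\circ\chilt$ and $g\mapsto g(v_\tau)/v_\tau$ are analytic characters of $\Gamma_K$ (the latter since $v_\tau$ itself is pro-analytic, being a unit obtained from the pro-$F$-analytic element $t_\tau/y_\tau$ by Theorems~\ref{xilocan}--\ref{proanbrig}), so dividing the Taylor series of $g\mapsto g(\nabla_\tau w)$ by them term-by-term produces a locally analytic series with values in $t_\tau v_\tau\,\btrig{}{,K}$; since the latter is a closed $\btrig{}{,K}$-submodule in each Banach piece $\bt^{[r;s]}_K$, the Taylor coefficients lie in it, and dividing them all by the constant $t_\tau v_\tau$ yields the Taylor expansion of $g\mapsto g(\partial_\tau w)$ with coefficients in $\btrig{}{,K}$. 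Hence $\partial_\tau(w) \in (\btrig{}{,K})^{\pa}$.

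The main obstacle is the last paragraph: one really has to know that multiplication by $t_\tau v_\tau$ has closed image in $\bt^{[r;s]}_K$, so that divisibility of the values of an analytic function by $t_\tau v_\tau$ propagates to the Taylor coefficients. Steps~1 and~2 are essentially formal once the definitions of $\nabla_\tau$ and of the topological generators of $\btrig{}{,K}$ are in place.
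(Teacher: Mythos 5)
Your Step 2 contains the essential gap, and it is exactly the content of the lemma. You assert that $\partial_\tau$ is a well-defined derivation of all of $\btrig{}{,K}$ because the ring is ``topologically generated'' by the elements $f(y_\upsilon)$, and then deduce that $\nabla_\tau(w)=t_\tau v_\tau\,\partial_\tau(w)$ holds inside $\btrig{}{,K}$. This is not true as stated: the closure of the power series in the $y_\upsilon$ (even after applying all $\phi_q^{-n}$) is only dense in the \emph{pro-analytic} vectors $(\btrig{}{,K})^{\pa}$ (that is the content of theorem \ref{btanqp}), not in $\btrig{}{,K}$, which contains many $H_K$-invariant Witt-vector-type elements that are not limits of such series; moreover $\nabla_\tau$ is only defined on locally/pro-analytic vectors, so the identity you invoke does not even make sense on all of $\btrig{}{,K}$. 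Even if you restrict to the dense subring of $(\btrig{}{,K})^{\pa}$, extending $\partial_\tau$ by continuity would require exactly the boundedness/divisibility statement you are trying to prove, namely that $\nabla_\tau(x)$ is divisible by $t_\tau$ in the ring rather than merely in its localization at $t_\tau$. Your Step 1 (that $\nabla_\tau$ preserves $(\btrig{}{,K})^{\pa}$) and your final bookkeeping with Taylor expansions are fine but formal; they do not supply this divisibility.

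The paper obtains the divisibility from an arithmetic input that has no counterpart in your argument: for every $g\in\weil$ with $g{\mid_F}=\tau$ and suitable $n(g)$, one has $\theta\circ\iota_g(x)\in\hat{K}_\infty^{\la}$, and corollary 4.3 of \cite{STLAV} says $\nabla_{\Id}=0$ on $\hat{K}_\infty^{\la}$; by lemma \ref{twistder} this gives $\theta\circ\iota_g(\nabla_\tau(x))=0$, so by lemma \ref{kerim} the element $\nabla_\tau(x)$ is divisible by $Q_m^\tau(y_\tau)$ for every admissible $m$, and the factorization $t_\tau=y_\tau\cdot\prod_{m\geq 1}Q_m^\tau(y_\tau)/\tau(\pi_F)$ then yields divisibility by $t_\tau$ (division by the unit $v_\tau$ being harmless). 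To repair your proof you would need to replace Step 2 by this kind of argument: some genuine input (here, the Sen-theoretic vanishing of $\nabla_{\Id}$ on $\hat{K}_\infty^{\la}$) is required to show that the derivative in the direction $\tau$ vanishes at all the $\tau$-compatible points $\theta\circ\iota_g$, which is what forces $t_\tau\mid\nabla_\tau(x)$.
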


\begin{proof}
Take $x \in (\btrig{,r}{,K})^{\pa}$ and take $n=hm+\tilde{n}(\tau)$ with $m$ such that $r_n \geq r$. Let $g \in \weil$ be such that $g{\mid_F}=\tau$ and $n(g)=n$. We have $\theta \circ \iota_g(x) \in E \otimes_F \hat{K}_\infty^{\la}$. Proposition \ref{nabnul} implies that $\nabla_{\Id} = 0$ on $E \otimes_F \hat{K}_\infty^{\la}$ and therefore $\theta \circ \iota_g(\nabla_\tau(x)) = 0$ by lemma \ref{twistder}. By lemma \ref{kerim}, this implies that $\nabla_\tau(x)$ is divisible by $Q_m^\tau(y_\tau)$ in $E \otimes_F \btrig{,r}{,K}$ for all $m$ such that $r_n \geq r$. Since $t_\tau = y_\tau \cdot \prod_{m \geq 1} Q_m^\tau(y_\tau)/ \tau (\pi)$, this implies the lemma (the argument for the division by an infinite product is the same as in lemma 4.6 of \cite{LB2}).
\end{proof}

\begin{lemm}
\label{fondense}
If $x \in \atplus_F$, $I$ is a closed interval not containing $0$, and $n \geq 1$, then there exists $\ell \geq 0$ and $x_n \in \OO_F[\phi_q^{-\ell}(u)]$ such that $x-x_n \in p^n \at_F^I$.
\end{lemm}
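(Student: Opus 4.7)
The strategy is to exploit the Lubin--Tate identification $\atplus_F = \OO_F\dcroc{u}$, which is the Lubin--Tate avatar of Fontaine's classical computation $(\atplus)^{H_{\Qp}} = \Zp\dcroc{\pi}$; see \cite{C02}. Granted this, every $x \in \atplus_F$ admits a unique expansion $x = \sum_{k \geq 0} a_k u^k$ with $a_k \in \OO_F$, and the lemma will follow from a density/truncation argument of $\OO_F[u]$ in $\atplus_F$ for the topology $V(\cdot,I)$ modulo $p^n$. In particular, $\ell = 0$ suffices.

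The key input is that $c := V(u,I) = \inf_{r \in I} V(u,r) > 0$. Writing $u = [\ubar] + \sum_{k \geq 1} \pi_F^k [v_k]$ with $v_k \in \etplus$, so $\ve(v_k) \geq 0$, one computes for every $r > 0$
\[
V(u,r) \;=\; \inf_{k \geq 0} \Bigl( \tfrac{k}{e} + \tfrac{p-1}{pr}\, \ve(v_k) \Bigr) \;\geq\; \min\Bigl( \tfrac{(p-1)\ve(\ubar)}{pr},\ \tfrac{1}{e} \Bigr) \;>\; 0,
\]
using that the $k=0$ term contributes $(p-1)\ve(\ubar)/(pr)$ with $\ve(\ubar) = q/((q-1)e) > 0$, and that all $k \geq 1$ contributions are $\geq 1/e$. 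Since $r \mapsto V(u,r)$ is an infimum of affine functions of $1/r$ and hence concave in $1/r$, its minimum on any closed bounded $I$ is attained at an endpoint, so $c > 0$.

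Given this, pick $N \geq n/c$ and set
\[
x_n := \sum_{k = 0}^{N-1} a_k u^k \;\in\; \OO_F[u] \;=\; \OO_F[\phi_q^{-0}(u)].
\]
Since $V(a_k, I) \geq 0$ and $V(u^k, I) \geq kc$, one obtains
\[
V(x - x_n, I) \;\geq\; \inf_{k \geq N} V(a_k u^k, I) \;\geq\; N c \;\geq\; n,
\]
and together with the normalization $V(p, I) = 1$ this forces $x - x_n \in p^n \at_F^I$, as required.

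\textbf{Main obstacle.} The only delicate point is the identification $\atplus_F = \OO_F\dcroc{u}$, which one invokes from the Lubin--Tate theory of the field of norms (\cite{C02}). Once this is granted, the rest is an elementary concavity/truncation computation, and the flexibility of taking $\ell > 0$ in the statement is not actually needed.
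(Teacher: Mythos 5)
There is a genuine gap, and it sits at the very first step: the identification $\atplus_F = \OO_F\dcroc{u}$ is false. The ring $\atplus_F = (\atplus)^{H_F} = W_F(\etplus_F)$ is a ring of $F$-Witt vectors over the \emph{perfect} ring $\etplus_F = (\etplus)^{H_F}$ (the ring of integers of the tilt of $\hat{F}_\infty$, i.e.\ of the completed perfection of the field of norms $\FF_q\dpar{\ubar}$), and this is strictly larger than the closure of $\OO_F[u]$. What is true in the classical cyclotomic setting is that the \emph{imperfect} ring $\afont_{\Qp}^+$ equals $\Zp\dcroc{\pi}$; it is a proper subring of $(\atplus)^{H_{\Qp}}$, and the same distinction (imperfect $\bfont$-type rings versus perfect tilde rings) is in force here. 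Consequently an element $x \in \atplus_F$ has no expansion $\sum_{k\geq 0} a_k u^k$ with $a_k \in \OO_F$, and your truncation argument has nothing to truncate. For instance $\atplus_F$ contains the Teichm\"uller lifts $[z]$ of all $z \in \etplus_F$, such as $[\phi_q^{-1}(\ubar)] = [\ubar^{1/q}]$; applying $\theta\circ\phi_q^{-k}$ (which sends $\at_F^I$ to $\OO_{\Cp}$ when $r_k \in I$, and sends $\OO_F[u]$ into $\OO_{F_k}$) shows that such elements are in general not congruent mod $p^n \at_F^I$ to anything in $\OO_F[u]$, so your concluding claim that ``$\ell = 0$ suffices'' is also false. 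The allowance $\ell \geq 0$ is not a convenience but the whole point of the lemma: one must approximate elements of the perfect ring $\etplus_F$ by elements of $\cup_{m\geq 0}\phi_q^{-m}(\FF_q\dcroc{\ubar})$, which is exactly Wintenberger's density result (corollaire 4.3.4 of \cite{WCN}) used in the paper, followed by a successive-approximation argument in Witt vectors: one corrects $x$ modulo $u^k$ (with $k$ chosen so that $u^k \in p^n\at_F^I$) and modulo $p^n$ by polynomials in $\phi_q^{-\ell}(u)$ with $\ell$ growing as needed.

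The parts of your argument that survive are the peripheral estimates: on a closed bounded interval $I$ one indeed has $c = \inf_{r\in I} V(u,r) > 0$, hence $u^k \in p^n\at_F^I$ for $k \geq n/c$, which is precisely the choice of $k$ made at the start of the paper's proof; and polynomial truncation of a genuine power series in $u$ poses no problem. But the heart of the statement is the density of $\cup_{\ell\geq 0}\OO_F[\phi_q^{-\ell}(u)]$ in the much larger perfect ring $\atplus_F$ for the relevant topology, and that is exactly the ingredient your proposal bypasses by an incorrect structural identification.
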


\begin{proof}
Let $k \geq 1$ be such that $u^k \in p^n \at_F^I$. By corollary 4.3.4 of \cite{WCN}, the ring $\cup_{m \geq 0} \phi_q^{-m} (\FF_q \dcroc{u})$ is $u$-adically dense in $\etplus_F$. By successive approximations, we find $\ell \geq 0$ and $x_n \in \OO_F[\phi_q^{-\ell}(u)]$ such that $x-x_n \in p^n \atplus_F + u^k \atplus_F$ so that $x-x_n \in p^n \at_F^I$.
\end{proof}

If $n \geq 1$ and $I$ is a closed interval not containing $0$, then lemma \ref{fondense} gives us $y_{\tau,n} \in \OO_E[\phi_q^{-\ell}(u)]$ such that $y_\tau-y_{\tau,n} \in p^n \OO_E \otimes_{\OO_F} \at_F^I$. Note that $y_\tau$ and $y_{\tau,n}$ belong to $(E \otimes_F \bt^I_F)^{\la}$. Let $\emb_0 = \emb \setminus \{ \Id \}$. If $\kbf \in \NN^{\emb_0}$, let $|\kbf|=\sum_{\tau \in \emb_0} k_\tau$ and let $\kbf! = \prod_{\tau \in \emb_0} k_\tau!$ and let $\unbf_\tau$ be the tuple whose entries are $0$ except the $\tau$-th one which is $1$. Let $(\ybf-\ybf_n)^\kbf = \prod_{\tau \in \emb_0} (y_\tau - y_{\tau,n})^{k_\tau}$ and $\partial^\kbf = \prod_{\tau \in \emb_0} \partial^{k_\tau}$. We have
\[ \partial_\tau (\ybf-\ybf_n)^\kbf = 
\begin{cases} 0 & \text{if $k_\tau=0$,} \\ k_\tau (\ybf-\ybf_n)^{\kbf-\unbf_\tau} & \text{if $k_\tau \geq 1$.} \end{cases} \]

By lemma \ref{eventnorm}, there exists $m \geq 1$ such that $y_\tau-y_{\tau,n} \in (E \otimes_F \bt_F^I)^{\Gamma_m\dan}$ and $\|y_\tau-y_{\tau,n}\|_{\Gamma_m} \leq p^{-n}$ for all $\tau \in \emb_0$. Let $\{x_\ibf\}_{\ibf \in \NN^{\emb_0}}$ be a sequence of elements of $(E \otimes_F \bt_K^I)^{\fla,\Gamma_m\dan}$ such that $\| p^{n|\ibf|} x_\ibf \|_{\Gamma_m} \to 0$ as $|\ibf| \to +\infty$. The series $\sum_{\ibf \in \NN^{\emb_0}} x_\ibf (\ybf-\ybf_n)^\ibf$  converges in $(E \otimes_F \bt_K^I)^{\Gamma_m\dan}$.

\begin{theo}
\label{btanqp}
If $x \in (E \otimes_F \bt_K^I)^{\la}$ and $n_0 \geq 0$, then there exists $m,n \geq 1$ and a sequence $\{x_\ibf\}_{\ibf \in \NN^{\emb_0}}$ of $(E \otimes_F \bt_K^I)^{\fla,\Gamma_m\dan}$ such that $\| p^{(n-n_0)|\ibf|} x_\ibf \|_{\Gamma_m} \to 0$ and $x = \sum_{\ibf \in \NN^{\emb}} x_\ibf (\ybf-\ybf_n)^\ibf$. 
\end{theo}

\begin{proof}
Let $m \geq 1$ be such that $x \in (E \otimes_F \bt_K^I)^{\Gamma_m \dan}$. The maps $\partial_\tau : (E \otimes_F \bt_K^I)^{\Gamma_m \dan} \to (E \otimes_F \bt_K^I)^{\Gamma_m \dan}$ are continuous and hence there exists $n \geq 1$ such that $\| \partial^\kbf x \|_{\Gamma_m} \leq p^{(n-n_0-1)|\kbf|} \| x \|_{\Gamma_m}$ for all $\kbf \in \NN^{\emb_0}$. If $\ibf \in \NN^{\emb_0}$, let 
\[ x_\ibf = \frac{1}{\ibf!} \sum_{\kbf \in \NN^{\emb_0}} (-1)^{|\kbf|} \frac{(\ybf-\ybf_n)^\kbf}{\kbf!} \partial^{\kbf+\ibf}(x) . \]
The series above converges in $(E \otimes_F \bt_K^I)^{\Gamma_m\dan}$ to an element $x_\ibf$ such that $\partial_\tau(x_\ibf) = 0$ for all $\tau \in \emb_0$, so that $x_\ibf \in (E \otimes_F \bt_K^I)^{\fla,\Gamma_m\dan}$. In addition, $\|x_\ibf\|_{\Gamma_m} \leq p^{(n-n_0)|\ibf|} \cdot |p^{|\ibf|}/\ibf!|_p \cdot \| x \|_{\Gamma_m}$ so that $\| p^{(n-n_0)|\ibf|} x_\ibf \|_{\Gamma_m} \to 0$, the series $\sum_{\ibf \in \NN^{\emb_0}} x_\ibf (\ybf-\ybf_n)^\ibf$ converges, and its limit is $x$.
\end{proof}

\begin{coro}
\label{parsur}
If $F \neq \Qp$ and $\tau \in \emb$, then $\partial_\tau : (E \otimes_F \bt_K^I)^{\la} \to (E \otimes_F \bt_K^I)^{\la}$ is onto.
\end{coro}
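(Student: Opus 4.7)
The plan is to invert $\partial_\tau$ by term-by-term formal integration of the power series expansion furnished by Theorem~\ref{btanqp}. Since $F \neq \Qp$, the set $\emb \setminus \{\tau\}$ is non-empty for every $\tau \in \emb$, so one can always choose a distinguished embedding distinct from $\tau$. By symmetry of the construction of Section~\ref{locanper} under the choice of distinguished embedding (the only special feature of $\Id$ in Theorem~\ref{btanqp} being a normalization), we may assume without loss of generality that $\tau \in \emb_0 = \emb \setminus \{\Id\}$, applying Theorem~\ref{btanqp} as stated; the case $\tau = \Id$ is handled by replaying the theorem with some $\tau_0 \in \emb_0$ in the role of $\Id$.

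Given $x \in (\bt_K^I)^{\la}$, Theorem~\ref{btanqp} applied with $n_0 = 1$ produces integers $m, n \geq 1$ and coefficients $x_\ibf \in (\bt_K^I)^{\fla,\Gamma_m\dan}$ with $\|p^{(n-1)|\ibf|} x_\ibf\|_{\Gamma_m} \to 0$, such that $x = \sum_\ibf x_\ibf (\ybf - \ybf_n)^\ibf$. The natural antiderivative candidate is
\[ y := \sum_{\ibf \in \NN^{\emb_0}} \frac{x_\ibf}{\ibf_\tau + 1}\, (\ybf - \ybf_n)^{\ibf + \unbf_\tau}. \]
This series converges in $(\bt_K^I)^{\Gamma_m\dan}$: using $\|(\ybf - \ybf_n)^{\ibf+\unbf_\tau}\|_{\Gamma_m} \leq p^{-n(|\ibf|+1)}$ and $|1/(\ibf_\tau + 1)|_p \leq \ibf_\tau + 1 \leq |\ibf| + 1$, the general term is bounded by $(|\ibf|+1) p^{-|\ibf|-n} \cdot \|p^{(n-1)|\ibf|}x_\ibf\|_{\Gamma_m}^{-1}$ times a null sequence --- the linear blow-up of the integration factor is absorbed by the exponential decay, so $y \in (\bt_K^I)^{\Gamma_m\dan} \subset (\bt_K^I)^{\la}$.

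To verify $\partial_\tau y = x$, I would use continuity of $\partial_\tau$ on $(\bt_K^I)^{\Gamma_m\dan}$ together with two observations. First, each $x_\ibf$ is $F$-analytic, so $\nabla_\upsilon x_\ibf = 0$ for every $\upsilon \neq \Id$; in particular, since $\tau \in \emb_0$, one has $\partial_\tau x_\ibf = 0$. Second, a direct derivation gives
\[ \partial_\tau (\ybf - \ybf_n)^{\ibf + \unbf_\tau} = (\ibf_\tau + 1)(\ybf - \ybf_n)^{\ibf}, \]
exactly cancelling the denominator. The Leibniz rule then yields $\partial_\tau y = \sum_\ibf x_\ibf (\ybf - \ybf_n)^\ibf = x$.

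The main obstacle is the convergence bookkeeping --- tracking how the factor $|1/(\ibf_\tau + 1)|_p$ interacts with the norms $\|\cdot\|_{\Gamma_m}$ --- but this is routine, as the polynomial growth of this factor is dominated by the exponential decay built into the hypotheses of Theorem~\ref{btanqp}. The essential role of the hypothesis $F \neq \Qp$ is to guarantee the existence of a distinguished embedding different from $\tau$, which is precisely what places the coefficients $x_\ibf$ of the expansion of $x$ into the kernel of $\partial_\tau$; if $F = \Qp$, then $\emb_0 = \emptyset$ and the expansion reduces to a single term, depriving the argument of any room to integrate.
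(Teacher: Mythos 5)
Your proposal is correct and is essentially the paper's own proof: apply theorem \ref{btanqp} with $n_0=1$, integrate term by term via $z=\sum_{\ibf} \frac{x_\ibf}{i_\tau+1}(\ybf-\ybf_n)^{\ibf+\unbf_\tau}$ (using $\partial_\tau x_\ibf=0$ and $\partial_\tau(\ybf-\ybf_n)^{\ibf+\unbf_\tau}=(i_\tau+1)(\ybf-\ybf_n)^{\ibf}$), with convergence coming from the extra margin $p^{-|\ibf|}$ absorbing $|1/(i_\tau+1)|_p$, and treat $\tau=\Id$ by the symmetry of the embeddings, exactly as the paper does. The only blemish is notational: in your convergence estimate the factor should be $\|p^{(n-1)|\ibf|}x_\ibf\|_{\Gamma_m}$ itself (a null sequence), not its inverse.
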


\begin{proof}
Suppose that $\tau \neq \Id$, and write $x = \sum_{\ibf \in \NN^{\emb_0}} x_\ibf (\ybf-\ybf_n)^\ibf$ as in theorem \ref{btanqp}, with $n_0=1$. Since $\partial_\tau (x_\ibf (\ybf-\ybf_n)^\ibf) = x_\ibf i_\tau (\ybf-\ybf_n)^{\ibf-\unbf_\tau}$ if $i_\tau \geq 1$, we have $x=\partial_\tau(z)$ with 
\[ z=\sum_{\ibf \in \NN^{\emb_0}} \frac{x_\ibf}{i_\tau+1} (\ybf-\ybf_n)^{\ibf+\unbf_\tau}. \] 
The series converges because $\|x_\ibf\|_{\Gamma_m} \leq p^{(n-1)|\ibf|} \| x \|_{\Gamma_m}$. If $\tau = \Id$, then one may use the fact that the embeddings play a symmetric role.
\end{proof}

\begin{rema}
Corollary \ref{parsur} does not hold if $F=\Qp$. 
\end{rema}

\section{A multivariable monodromy theorem}
\label{monsec}

Let $K_\infty$ be the Lubin-Tate extension of $K$. In this section, we explain how to descend certain $(E \otimes_F \btrig{}{,K})^{\pa}$-modules to $(E \otimes_F \btrig{}{,K})^{\fpa}$ (theorem \ref{monoconj}). This result is used in \S \ref{krproof} in order to prove theorem \ref{monokr}. Let $\mfont$ be a free $(E \otimes_F \btrig{}{,K})^{\pa}$-module, endowed with a bijective Frobenius map $\phi_q : \mfont \to \mfont$ and with a compatible pro-analytic action of $\Gamma_K$, such that $\nabla_\tau(\mfont) \subset t_\tau \cdot \mfont$ for all $\tau \in \emb_0$. Write $\partial_\tau = v_\tau^{-1} t_\tau^{-1} \nabla_\tau$ so that $\partial_\tau(\mfont) \subset \mfont$ if $\tau \in \emb_0$. Let  
\[ \Sol(\mfont) = \{ x \in \mfont, \text{ such that $\partial_\tau(x) = 0$ for all $\tau \in \emb_0$} \} \] 
so that $\Sol(\mfont)$ is a $(E \otimes_F \btrig{}{,K})^{\fpa}$-module stable under $\Gamma_K$, and such that $\phi_q : \Sol(\mfont) \to \Sol(\mfont)$ is a bijection. Our monodromy theorem is the following result.

\begin{theo}\label{monoconj}
If $\mfont$ is a free $(E \otimes_F \btrig{}{,K})^{\pa}$-module with a bijective Frobenius map $\phi_q$ and a compatible pro-analytic action of $\Gamma_K$, such that $\partial_\tau(\mfont) \subset \mfont$ for all $\tau \in \emb_0$, then $\Sol(\mfont)$ is a free $(E \otimes_F \btrig{}{,K})^{\fpa}$-module, and $\mfont = (E \otimes_F \btrig{}{,K})^{\pa} \otimes_{(E \otimes_F \btrig{}{,K})^{\fpa}} \Sol(\mfont)$.
\end{theo}

\begin{rema}\label{deskf}
The usual monodromy conjecture asks for solutions after possibly performing a finite extension $L/K$ and adjoining a logarithm. In this case, we note the following.
\begin{enumerate}
\item there is no need to perform a finite extension $L/K$ since by an analogue of proposition I.3.2 of \cite{LB10}, a $(E \otimes_F \btrig{}{,L})^{\fpa}$-module with an action of $\Gal(L_\infty/K)$ comes by extension of scalars from a $(E \otimes_F \btrig{}{,K})^{\fpa}$-module with an action of $\Gal(K_\infty/K)$. In the classical case, the coefficients are too small to be able to perform this descent.
\item there is no need to adjoin a log since the maps $\partial_\tau : (E \otimes_F \bt_K^I)^{\la} \to (E \otimes_F \bt_K^I)^{\la}$ are onto.
\end{enumerate}
\end{rema}

\begin{proof}[Proof of theorem \ref{monoconj}]
Let $r \geq 0$ be such that $\mfont$ and all its structures are defined over $(E \otimes_F \btrig{,r}{,K})^{\pa}$, and let $I \subset [r;+\infty[$ be a closed interval, such that $I \cap q I \neq \emptyset$. Let $m_1,\hdots,m_d$ be a basis of $\mfont$ and let $\mfont^I = \oplus_{i=1}^d (E \otimes_F \bt_K^I)^{\la} \cdot m_i$. Let $D_\tau = \Mat(\partial_\tau)$ for $\tau \in \emb_0$. We first prove that $\Sol(\mfont^I)$ is a free $(E \otimes_F \bt_K^I)^{\fla}$-module of rank $d$, such that $\mfont^I = (E \otimes_F \bt_K^I)^{\la} \otimes_{(E \otimes_F \bt_K^I)^{\fla}} \Sol(\mfont^I)$. This amounts to finding a matrix $H \in \GL_d((E \otimes_F \bt_K^I)^{\la})$ such that $\partial_\tau(H) + D_\tau H = 0$ for all $\tau \in \emb_0$. If $\kbf \in \NN^{\emb_0}$, then let $D_\kbf = \Mat(\partial^\kbf)$. 
If $n$ is large enough, then 
\[ H = \sum_{\kbf \in \NN^{\emb_0}} (-1)^{|\kbf|} D_\kbf \frac{(\ybf-\ybf_n)^\kbf}{\kbf!} \]
converges in $\M_d((E \otimes_F \bt_K^I)^{\la})$ to a solution of the equations $\partial_\tau(H) + D_\tau H = 0$ for $\tau \in \emb_0$. If in addition $n \gg 0$, then $\|D_\kbf \cdot (\ybf-\ybf_n)^\kbf/ \kbf! \| < 1$ if $|\kbf| \geq 1$ so that $H \in \GL_d((E \otimes_F \bt_K^I)^{\la})$. The definition of $H$ is inspired by theorem \ref{btanqp}.

This proves that $\Sol(\mfont^I)$ is a free $(E \otimes_F \bt_K^I)^{\fla}$-module of rank $d$ such that $\mfont^I = (E \otimes_F \bt_K^I)^{\la} \otimes_{(E \otimes_F \bt_K^I)^{\fla}} \Sol(\mfont^I)$. It does not seem possible to glue these solutions together for varying $I$ using a Mittag-Leffler process, because the spaces $(E \otimes_F \bt_K^I)^{\fla}$ are LB spaces but not Banach spaces, and the state of the art concerning projective limits of such spaces seems to be insufficient in our case (see the remark preceding theorem 3.2.16 in \cite{JW}). We use instead the Frobenius map to show that we can remain at a finite level, that is work with modules over $E \otimes_F \bfont_{K,n}^I$ for a fixed $n$.

Let $m_1,\hdots,m_d$ be a basis of $\Sol(\mfont^I)$. The Frobenius map $\phi_q$ gives rise to bijections $\phi_q^k :  \Sol(\mfont^I) \to \Sol(\mfont^{q^k I})$ for all $k \geq 0$. Let $J = I \cap q I$ and let $P \in \M_d((E \otimes_F \bt_K^J)^{\la})$ be the matrix of $\phi_q(m_1),\hdots,\phi_q(m_d)$ in the basis $m_1,\hdots,m_d$. We have 
\[ \Sol(\mfont^J) = \Sol((E \otimes_F \bt_K^J)^{\la} \otimes_{(E \otimes_F \bt_K^I)^{\fla}} \Sol(\mfont^I)) = (E \otimes_F \bt_K^J)^{\fla} \otimes_{(E \otimes_F \bt_K^I)^{\fla}} \Sol(\mfont^I), \]
so that $m_1,\hdots,m_d$ is also a basis of $\Sol(\mfont^J)$. Likewise, $\phi_q(m_1),\hdots,\phi_q(m_d)$ is also basis of $\Sol(\mfont^J)$ and hence $P \in \GL_d((E \otimes_F \bt_K^J)^{\fla})$. 

By (2) of theorem \ref{xilocan}, $(E \otimes_F \bt_K^I)^{\fla} = E \otimes_F \bfont_{K,\infty}^I$ so that there exists $n \geq 0$ such that $P \in \GL_d(E \otimes_F \bfont_{K,n}^J)$. For $k \geq 0$, let $I_k = q^k I$ and let $J_k = I_k \cap I_{k+1}$ and $E_k = \oplus_{i=1}^d E \otimes_F \bfont_{K,n}^{I_k} \cdot \phi_q^k(m_i)$. The fact that $P \in \GL_d(E \otimes_F \bfont_{K,n}^J)$ implies that $\phi_q^k(P) \in \GL_d(E \otimes_F \bfont_{K,n}^{J_k})$ and hence 
\[ (E \otimes_F \bfont_{K,n}^{J_k}) \otimes_{E \otimes_F \bfont_{K,n}^{I_k}} E_k = (E \otimes_F \bfont_{K,n}^{J_k}) \otimes_{E \otimes_F \bfont_{K,n}^{I_{k+1}}} E_{k+1} \] 
for all $k \geq 0$. The collection $\{E_k\}_{k \geq 0}$ therefore forms a vector bundle over $E \otimes_F \bfont_{K,n}^{[r;+\infty[}$ for $r = \min(I)$. By theorem 2.8.4 of \cite{KSF} (see also \S 3 of \cite{ST03}), there exist elements $n_1,\hdots,n_d$ of $\cap_{k \geq 0} E_k \subset \mfont$ such that $E_k =  \oplus_{i=1}^d (E \otimes_F \bfont_{K,n}^{I_k}) \cdot n_i$ for all $k \geq 0$. These elements give a basis of $\Sol(\mfont)$ over $(E \otimes_F \btrig{}{,K})^{\fpa}$, which is also a basis of $\mfont$ over $(E \otimes_F \btrig{}{,K})^{\pa}$, and this proves the theorem.
\end{proof}

\section{Lubin-Tate $(\phi,\Gamma)$-modules}
\label{pglt}

We now review the construction of Lubin-Tate $(\phi,\Gamma)$-modules. Let $K_\infty$ be the Lubin-Tate extension of $K$ constructed above. If $K$ is a finite extension of $F$, let $\bfont_K$ be the $p$-adic completion of the field $\bdag{}_K$ defined in \S \ref{ltper}, and let $\afont_K$ denote the ring of integers of $\bfont_K$ for $\vp(\cdot)$. A $(\phi_q,\Gamma_K)$-module over $\bfont_K$ is a finite dimensional $\bfont_K$-vector space $\dfont$, along with a semilinear Frobenius map $\phi_q$ and a commuting continuous and semilinear action of $\Gamma_K$. We say that $\dfont$ is \'etale if $\dfont = \bfont_K \otimes_{\afont_K} \dfont_0$ where $\dfont_0$ is a $(\phi_q,\Gamma_K)$-module over $\afont_K$. Let $\bfont$ be the $p$-adic completion of $\cup_{K/F} \bfont_K$. By specializing the constructions of \cite{F90}, Kisin and Ren prove the following theorem (see theorem 1.6 of \cite{KR09}).

\begin{theo}\label{fontkisren}
The functors $V \mapsto (\bfont \otimes_F V)^{H_K}$ and $\dfont \mapsto (\bfont \otimes_{\bfont_K} \dfont)^{\phi_q=1}$ give rise to mutually inverse equivalences of categories between the category of $F$-linear representations of $G_K$ and the category of \'etale $(\phi_q,\Gamma_K)$-modules over $\bfont_K$.
\end{theo}

We say that a $(\phi_q,\Gamma_K)$-module $\dfont$ is overconvergent if there exists a basis of $\dfont$ in which the matrices of $\phi_q$ and of all $g \in \Gamma_K$ have entries in $\bdag{}_K$.  This basis generates a $\bdag{}_K$-vector space $\ddag{}$ which is canonically attached to $\dfont$. The main result of \cite{CC98} states that if $F=\Qp$,  then every \'etale $(\phi_q,\Gamma_K)$-module over $\bfont_K$ is overconvergent (the proof is given for $\pi=p$, but it is easy to see that it works for any uniformizer). If $F \neq \Qp$, then some simple examples (cf.\ \cite{FX12}) show that this is no longer the case.

We say that an $F$-linear representation $V$ of $G_K$ is $F$-analytic if $\Cp \otimes_F^\tau V$ is the trivial $\Cp$-semilinear representation of $G_K$ for all embeddings $\tau \neq \Id \in \emb$. This definition is the natural generalization of Kisin and Ren's $L$-crystalline representations (\S 3.3.7 of \cite{KR09}). Kisin and Ren go on to show that if $K \subset F_\infty$, and if $V$ is a crystalline $F$-analytic representation of $G_K$, then the $(\phi_q,\Gamma_K)$-module attached to $V$ is overconvergent (see \S 3.3 of \cite{KR09}; they actually prove a stronger result, namely that the $(\phi_q,\Gamma_K)$-module attached to such a $V$ is of finite height). 

More generally, recall that $E$ is a finite extension of $F^{\Gal}$, which is our field of coefficients. Theorem \ref{fontkisren} extends to an equivalence of categories between the category of $E$-linear representations of $G_K$ and the category of \'etale $(\phi_q,\Gamma_K)$-modules over $E \otimes_F \bfont_K$.

\begin{lemm}
\label{fanelin}
If $V$ is an $E$-representation of $G_K$, then the following are equivalent:
\begin{enumerate}
\item $V$ seen as an $F$-representation is $F$-analytic;
\item $\Cp \otimes_E^g V$ is the trivial $\Cp$-semilinear representation of $G_K$ for all $g \in \Gal(E/\Qp)$ such that $g{\mid_F} \neq \Id$.
\end{enumerate}
\end{lemm}

\begin{proof}
If $\tau \in \emb$, then $\Cp \otimes_F^\tau V = \Cp \otimes_F^\tau (E \otimes_E V) = \prod_{g{\mid_F} = \tau} \Cp \otimes_E^g V$.
\end{proof}

If $\drig{}$ is a $(\phi_q,\Gamma_K)$-module over $\brig{}{,K}$, and if $g \in \Gamma_K$ is close enough to $1$, then by standard arguments (see 2.1.2 of \cite{KR09}), the series $\log(g) = \log(1+(g-1))$ gives rise to a differential operator $\nabla_g : \drig{} \to \drig{}$. The map $\mathrm{Lie}\,\Gamma_K \to \End(\drig{})$ arising from $v \mapsto \nabla_{\exp(v)}$ is $\Qp$-linear, and we say that $\drig{}$ is $F$-analytic if this map is $F$-linear (see \S 2.1 of \cite{KR09} and \S 1.3 of \cite{FX12}). This is equivalent to the requirement that the elements of $\drig{}$ be pro-$F$-analytic vectors for the action of $\Gamma_K$.

\begin{theo}\label{mex}
If $V$ is an overconvergent $E$-representation of $G_K$ and if $\drig{}(V) = (E \otimes_F \brig{}{,K}) \otimes_{E \otimes_F \bdag{}_K} \ddag{}(V)$ is $F$-analytic, then $V$ is $F$-analytic.
\end{theo}

\begin{proof}
When $F/\Qp$ is unramified and $K \subset F_\infty$ and $E=F$, this is corollary 4.2 of \cite{PGMLT}. The proof of the general case is similar, and we recall the main arguments below: we prove that if $\drig{}(V)$ is $F$-analytic and if $\tau \in \Gal(E/\Qp)$ is such that $\tau{\mid_F} \neq \Id$, then $V$ is $\Cp$-admissible at $\tau$.

Let $r$ be large enough so that $\dtrig{}(V) = \btrig{}{} \otimes_{\brig{,r}{,K}} \drig{,r}(V)$, let $I$ be a closed subinterval of $[r;+\infty[$, and let $\dfont^I(V) = \bfont^I_K \otimes_{\brig{,r}{,K}} \drig{,r}(V)$. Let $n=hm+\tilde{n}(\tau)$ where $m \geq 0$ is such that $p^{n-1}(p-1) \in I$. Let $g = (\tau,n) \in \weil$. We have the map $\theta \circ \iota_g : E \otimes_F \bt^I \to E \otimes_F \Cp$, giving rise to an isomorphism 
\[ (E \otimes_F \Cp) \otimes^{\theta \circ \iota_g}_{E \otimes_F \bfont^I_K} \dfont^I(V) \to \Cp \otimes^{\tau}_F V. \]

Let $\hat{K}_\infty^{\tau}$ denote the field of locally $\tau{\mid_F}$-analytic vectors of $\hat{K}_\infty$ for the action of $\Gamma_K$. By lemma \ref{twistder}, $\theta \circ \iota_g(E \otimes_F \bfont_K^I ) \subset E \otimes_F \hat{K}_\infty^{\tau}$. Let $\dsen^\tau(V)$ be the $E \otimes_F \hat{K}_\infty^{\tau}$-module
\[ \dsen^\tau(V) = E \otimes_F \hat{K}_\infty^{\tau} \otimes_{\theta \circ \iota_g(E \otimes_F \bfont_K^I )} \theta \circ \iota_g( \dfont^I(V) ). \]
It is free of rank $d = \dim(V)$, its image in $(\Cp \otimes^{\tau}_F V)^{H_K}$ generates $\Cp \otimes^{\tau}_F V$, and its elements are all locally $\tau$-analytic vectors of $(\Cp \otimes^{\tau}_F V)^{H_K}$ because $\drig{}(V)$ is $F$-analytic and $\iota_g \circ \nabla_\tau = \nabla_{\Id} \circ \iota_g$ by lemma \ref{twistder}. If $y \in \dsen^\tau(V)$, then $g(y) = \exp(\tau (\ell (g) ) \cdot \nabla_\tau)(y)$ if $g \in \Gamma_K$ is close to $1$. 

By proposition \ref{fouper}, there exists $x_\tau \in \Cp$ such that $g(x_\tau)-x_\tau = \tau(\ell(g))$ if $g \in G_{F^{\Gal}}$. The element $x_\tau$ belongs to $E \cdot \hat{K}_\infty^{\tau}$ and satisfies $\nabla_\tau(x_\tau)=1$. Take $y \in \dsen^\tau(V)$, and choose $x_{\tau,0} \in E \cdot K_\infty$  such that $|x_\tau-x_{\tau,0}|_p$ is small enough. The series
\[ C(y) = \sum_{k \geq 0} (-1)^k \frac{(x_\tau-x_{\tau,0})^k}{k!} \nabla_\tau^k(y) \]
converges for the topology of $\dsen^\tau(V)$, and a short computation shows that $\nabla_\tau(C(y)) = 0$, so that $C(y) \in (\Cp \otimes^{\tau}_F V)^{G_{E \cdot K_n}}$ for $n = n(y) \gg 0$. In addition, $n(y)=n(\nabla_\tau^k(y))$ for all $k \geq 0$, the series for $C(\nabla_\tau^k(y))$ also converges for the topology of $\dsen^\tau(V)$, and $y= \sum_{k \geq 0} (x_\tau-x_{\tau,0})^k/k! \cdot C(\nabla_\tau^k(y))$.

If $y_1,\hdots,y_d$ is a basis of $\dsen^\tau(V)$, and if $n \geq \max n(y_i)$, then the above computations show that the elements $y_i$ belong to $\hat{K}_\infty^{\tau} \otimes_{K_n} (\Cp \otimes^{\tau}_F V)^{G_{E \cdot K_n}}$, so that $(\Cp \otimes^{\tau}_F V)^{G_{E \cdot K_n}}$ generates $(\Cp \otimes^{\tau}_F V)^{H_K}$. This implies that $V$ is $\Cp$-admissible at the embedding $\tau$.
\end{proof}

In section \ref{krproof}, we prove Theorem C, which says that all $F$-analytic representations of $G_K$ are overconvergent. This was previously known for $F=\Qp$ by the main result of \cite{CC98}, for crystalline representations by \S 3 of \cite{KR09} and for reducible (or even trianguline) $2$-dimensional representations by theorem 0.3 of \cite{FX12}.

\section{The twisted cyclotomic case}
\label{twistcyclo}

Let $K_\infty$ be the Lubin-Tate extension of $K$. Recall that $K_\infty^\eta/K$ is the extension of $K$ attached to $\eta\chicyc$ where $\eta$ is an unramified character of $G_K$. When $\eta=1$, $K_\infty^\eta$ is the cyclotomic extension of $K$ and the Cherbonnier-Colmez theorem (see \cite{CC98}) says that there is an equivalence of categories between \'etale $(\phi,\Gamma)$-modules over $\bdag{}_{K,\cyc}$ and $F$-representations of $G_K$. If $\eta$ is not the trivial character, then there is still such an equivalence of categories, between \'etale $(\phi,\Gamma)$-modules over $\bdag{}_{K,\eta}$ and $F$-representations of $G_K$. This can be seen in at least two ways.
\begin{enumerate}
\item One can redo the whole proof of the Cherbonnier-Colmez theorem for $K_\infty^\eta / K$, and this works because $\Gal(K_\infty^\eta / K)$ is an open subgroup of $\Zp^\times$;
\item One can use the fact that $K_\infty^\eta \cdot \Qpnr = K(\mu_{p^\infty}) \cdot \Qpnr$, apply the classical Cherbonnier-Colmez theorem, and descend from $K_\infty^\eta \cdot \Qpnr$ to $K_\infty^\eta$, which poses no problem since that extension is unramified.
\end{enumerate}
The field $\bdag{}_{K,\eta}$ is a finite extension of the field of power series in a variable $X_\eta$, which is an element of $\OO_{\hat{F}^{\unr}}\dcroc{X_{\cyc}}$, of the form $z_1 X_{\cyc} + \O(\deg \geq 2)$ with $z_1 \in \OO^\times_{\hat{F}^{\unr}}$.

Let $V$ be a $\Qp$-linear representation of $G_K$. By the above generalization of the Cherbonnier-Colmez theorem, $V$ is overconvergent, so that we can attach to $V$ the $\bdag{}_{K,\eta}$-vector space $\ddag{}_\eta(V) = \cup_{r \gg 0} \ddag{,r}_\eta(V)$. Let $\dfont^{[r;s]}_\eta(V)$ and $\ddag{,r}_{\rig,\eta}(V)$ denote the various completions of $\ddag{,r}_\eta(V)$. Let 
\[ \dtilde^{[r;s]}_\eta(V) = (\bt^{[r;s]} \otimes_{\Qp} V)^{G_{K_\infty^\eta}} \quad\text{and}\quad \dtdag{,r}_{\rig,\eta}(V) = (\btrig{,r}{} \otimes_{\Qp} V)^{G_{K_\infty^\eta}}. \]
The Cherbonnier-Colmez theorem implies that $\dtilde^{[r;s]}_\eta(V) =  \bt^{[r;s]}_{K,\eta} \otimes_{\bfont^{[r;s]}_{K,\eta}} \dfont^{[r;s]}_\eta(V)$ and that $\dtdag{,r}_{\rig,\eta}(V) = \btrig{,r}{,K,\eta} \otimes_{\brig{,r}{,K,\eta}} \ddag{,r}_{\rig,\eta}(V)$.

\begin{theo}\label{pgmancy}
We have 
\begin{enumerate}
\item $\dtilde^{[r;s]}_\eta(V)^{\la} = \bfont^{[r;s]}_{K,\eta,\infty} \otimes_{\bfont^{[r;s]}_{K,\eta}} \dfont^{[r;s]}_\eta(V)$;
\item $\dtdag{,r}_{\rig,\eta}(V)^{\pa} =  \brig{,r}{,K,\eta,\infty} \otimes_{\brig{,r}{,K,\eta}} \ddag{,r}_{\rig,\eta}(V)$.
\end{enumerate}
\end{theo}

\begin{proof}
We have $\dtilde^{[r;s]}_\eta(V) =  \bt^{[r;s]}_{K,\eta} \otimes_{\bfont^{[r;s]}_{K,\eta}} \dfont^{[r;s]}_\eta(V)$, and (1) now follows from (2) of theorem \ref{xilocan}, proposition \ref{basan} and from the fact that the elements of $\dfont^{[r;s]}_\eta(V)$ are locally analytic (see \S 2.1 of \cite{KR09}). Likewise, (2) follows from (3) of theorem \ref{xilocan} and proposition \ref{baspa} and from the fact that the elements of $\ddag{,r}_{\rig,\eta}(V)$ are pro-analytic.
\end{proof}

Theorem \ref{pgmancy} shows that in the twisted cyclotomic case, the $(\phi,\Gamma)$-modules over the Robba ring attached to Galois representations can be recovered from the space of locally analytic vectors of certain modules of invariants. It is this observation that inspires the constructions of the next section.

\section{Multivariable $(\phi,\Gamma)$-modules}
\label{pgmult}

We now explain how to construct $(\phi,\Gamma)$-modules over the ring $(\btrig{}{,K})^{\pa}$ for any $p$-adic Lie extension $K_\infty/K$ such that there exists an unramified character $\eta$ of $G_K$ with $K_\infty^\eta \subset K_\infty$ (in particular, the constructions of this section apply to the Lubin-Tate extensions of $K$). Let $\Gamma_K = \Gal(K_\infty/K)$, let $H_K = \Gal(\Qpbar/K_\infty)$, let $V$ be a $p$-adic representation of $G_K$ of dimension $d$, and let 
\[ \dtilde^{[r;s]}_K(V) = (\bt^{[r;s]} \otimes_{\Qp} V)^{H_K} \quad\text{and}\quad \dtdag{,r}_{\rig,K}(V) = (\btrig{,r}{} \otimes_{\Qp} V)^{H_K}. \]
These two spaces are topological representations of $\Gamma_K$.
\begin{theo}\label{lagenok}
We have 
\begin{enumerate}
\item $\dtilde^{[r;s]}_K(V)^{\la} = (\bt^{[r;s]}_K)^{\la} \otimes_{\bfont^{[r;s]}_{K,\eta}} \dfont^{[r;s]}_\eta(V)$;
\item $\dtdag{,r}_{\rig,K}(V)^{\pa} =  (\btrig{,r}{,K})^{\pa} \otimes_{\brig{,r}{,K,\eta}} \ddag{,r}_{\rig,\eta}(V)$.
\end{enumerate}
\end{theo}

\begin{proof}
We have $\bt^{[r;s]} \otimes_{\Qp} V = \bt^{[r;s]} \otimes_{\bfont^{[r;s]}_{K,\eta}} \dfont^{[r;s]}_\eta(V)$, so that $\dtilde^{[r;s]}_K(V) = \bt^{[r;s]}_K \otimes_{\bfont^{[r;s]}_{K,\eta}} \dfont^{[r;s]}_\eta(V)$, and item (1) follows from proposition \ref{basan}. Item (2) is proved similarly.
\end{proof}

Let $\dtdag{}_{\rig,K}(V)^{\pa} = \cup_{r \gg 0} \dtdag{,r}_{\rig,K}(V)^{\pa}$. Theorem \ref{lagenok} implies that $\dtdag{}_{\rig,K}(V)^{\pa}$ is a free $(\btrig{}{,K})^{\pa}$-module of rank $\dim(V)$ which is stable under $\phi_q$ and $\Gamma_K$. We propose this module as a first candidate for a multivariable $(\phi_q,\Gamma_K)$-module in the case in which $\Gamma_K=\Gal(K_\infty/K)$. One can then attempt to construct other multivariable $(\phi,\Gamma)$-modules by descending from $(\btrig{}{,K})^{\pa}$ to certain nicer rings of power series. 

If $K_\infty = K F_\infty$ is a Lubin-Tate extension, and if $\calR(\{Y_\tau\}_{\tau \in \emb})$ denotes the Robba ring in $[F:\Qp]$ variables (see the appendix to \cite{ZS}, applied to the compact $p$-adic Lie group $\OO_F$), theorem \ref{btanqp} shows that $\cup_{n \geq 0} \phi_q^{-n} (\calR(\{Y_\tau\}_{\tau \in \emb}))$ is dense in $(\btrig{}{,K})^{\pa}$ via the map that sends $Y_\tau$ to $y_\tau$. 

\begin{enonce}{Question}
\label{quesdesc}
Which $(\phi,\Gamma)$-modules over $(\btrig{}{,K})^{\pa}$ descend to $\calR(\{Y_\tau\}_{\tau \in \emb})$?
\end{enonce}

For example, if $F$ is unramified over $\Qp$ and $\pi=p$ and $K=F$ and $K_\infty$ is the attached Lubin-Tate extension of $K$ and if $V$ is a crystalline representation of $G_K$, then by theorem A of \cite{PGMLT} one can descend $\dtdag{}_{\rig,K}(V)^{\pa}$ to a reflexive coadmissible module on the ring $\calR^{[0;+\infty[}(Y_0,\hdots,Y_{h-1})$ of functions on the $h$-dimensional open unit disk. 

Note that the cyclotomic element $X= [\varepsilon]-1$ belongs to $(\btrig{}{,K})^{\pa}$, but it is not in the image of $\cup_{n \geq 0} \phi_q^{-n} \calR(\{Y_\tau\}_{\tau \in \emb})$ if $F \neq \Qp$. Therefore, descending to smaller subrings of $(\btrig{}{,K})^{\pa}$ such as $\calR(\{Y_\tau\}_{\tau \in \emb})$ may be quite complicated. 

When $K_\infty/K$ is not a Lubin-Tate extension, it will be interesting to answer the following question.

\begin{enonce}{Question}
\label{questla}
What is the structure of the ring $(\btrig{}{,K})^{\pa}$? 
\end{enonce}

In the appendix to \cite{ZS}, Schneider constructs a (generally noncommutative) Robba ring attached to each compact $p$-adic Lie group $G$. Is the Robba ring attached to $\Gamma_K$ related to $(\btrig{}{,K})^{\pa}$? Finally, we mention that in \cite{KKSM}, Kedlaya discusses some necessary and sufficient conditions for certain elements of $\btrig{}{,K}$ to be locally analytic.

\section{Overconvergence of $F$-analytic representations}
\label{krproof}

We now give the proof of theorems C and D, using the construction of multivariable $(\phi,\Gamma)$-modules and the monodromy theorem. We therefore assume again that $K_\infty$ is the Lubin-Tate extension of $K$.

\begin{theo}\label{monokr}
The Lubin-Tate $(\phi_q,\Gamma_K)$-modules of $F$-analytic representations are overconvergent.
\end{theo}

Let $V$ be an $E$-linear representation of $G_K$ and let $\dtdag{,r}_{\rig,K}(V) = ((E \otimes_F \btrig{,r}{}) \otimes_E V)^{H_K}$. Since $K_\infty$ contains $K_\infty^\eta$, the $E \otimes_F \btrig{,r}{,K}$-module $\dtdag{,r}_{\rig,K}(V)$ is free of rank $d=\dim(V)$ by theorem \ref{lagenok} and there is an isomorphism compatible with $G_K$ and $\phi_q$
\[ (E \otimes_F \btrig{,r}{}) \otimes_{E \otimes_F \btrig{,r}{,K}}  \dtdag{,r}_{\rig,K}(V) = (E \otimes_F \btrig{,r}{}) \otimes_E V. \]

\begin{lemm}\label{monasig}
If $V$ is an $E$-representation of $G_K$ that is $\Cp$-admissible at $\tau \in \emb$, then 
\[ \nabla_\tau  (\dtdag{,r}_{\rig,K}(V))^{\pa} \subset t_\tau \cdot \dtdag{,r}_{\rig,K}(V)^{\pa}. \]
\end{lemm}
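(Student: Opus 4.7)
The plan is to mimic the proof of Lemma \ref{partin}, with the $\Cp$-admissibility hypothesis at $\tau$ replacing the input $\nabla_{\Id} = 0$ on $\hat{K}_\infty^{\la}$ that sufficed in the scalar case. Concretely: fix $x \in \dtdag{,r}_{\rig,K}(V)^{\pa}$, a closed interval $I = [r_\ell; r_k]$ with $r \leq r_\ell$, and an element $g \in \weil$ with $g{\mid_F} = \tau$ and $n(g) = \tilde{n}(\tau) + hk$. The ring map $\theta \circ \iota_g : \bt^I \to \Cp$ is $\tau^{-1}$-semilinear on $F$, so it extends to a continuous $H_K$-equivariant map
\[
\theta \circ \iota_g : \bt^I \otimes_F V \longrightarrow \Cp \otimes_F^\tau V,
\]
and on $H_K$-invariants gives a map $\dtilde^I_K(V) \to (\Cp \otimes_F^\tau V)^{H_K}$.

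The $\Cp$-admissibility of $V$ at $\tau$ provides a $G_K$-equivariant trivialization $\Cp \otimes_F^\tau V \cong \Cp^{\dim V}$; taking $H_K$-invariants yields an isomorphism $(\Cp \otimes_F^\tau V)^{H_K} \cong \hat{K}_\infty^{\dim V}$ of $\Gamma_K$-modules with the diagonal Galois action. Because $\theta \circ \iota_g$ is continuous and $\Gamma_K$-equivariant (modulo the $\tau^{-1}$-twist on $F$), it sends pro-analytic vectors to locally analytic ones, so $\theta \circ \iota_g(x) \in (\hat{K}_\infty^{\dim V})^{\la}$. Corollary 4.3 of \cite{STLAV} gives $\nabla_{\Id} = 0$ on $\hat{K}_\infty^{\la}$, and hence coordinate-wise on $(\hat{K}_\infty^{\dim V})^{\la}$. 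Applying Lemma \ref{twistder} component by component,
\[
\theta \circ \iota_g(\nabla_\tau(x)) \;=\; \nabla_{\Id}(\theta \circ \iota_g(x)) \;=\; 0.
\]

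By Lemma \ref{kerim} the kernel of $\theta \circ \iota_g$ on $\bt^I$ is the principal ideal $(Q_k^\tau(y_\tau))$; since $V$ is free of finite rank over $F$ and $Q_k^\tau(y_\tau)$ is $H_K$-invariant, the kernel on $\bt^I \otimes_F V$ is $Q_k^\tau(y_\tau) \cdot (\bt^I \otimes_F V)$ and the quotient $\nabla_\tau(x)/Q_k^\tau(y_\tau)$ still lives in $\dtilde^I_K(V)$. This divisibility holds for every admissible $k$, and one then invokes the product formula $t_\tau = y_\tau \cdot \prod_{m \geq 1} Q_m^\tau(y_\tau)/\tau(\pi_F)$ together with Lemma \ref{ber31} (making $y_\tau$ a unit in the relevant ring) to conclude $\nabla_\tau(x) \in t_\tau \cdot \dtdag{,r}_{\rig,K}(V)^{\pa}$, exactly as at the end of the proof of Lemma \ref{partin}.

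The step I expect to require the most care is the passage from "divisibility by each finite $Q_k^\tau(y_\tau)$ on each Fréchet piece" to genuine divisibility by the convergent infinite product $t_\tau/y_\tau$ inside $\dtdag{,r}_{\rig,K}(V)^{\pa}$: one must glue the quotients obtained for varying $k$ and $I$ into a single pro-analytic element. This is not formal, but the argument is parallel to the scalar case, relying on the pairwise coprimality of the $Q_m^\tau(y_\tau)$ and on Fréchet completeness of the relevant pieces of $(\btrig{,r}{,K})^{\pa}$; everything else is bookkeeping around semilinearity conventions.
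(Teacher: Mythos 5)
Your argument is essentially the paper's own proof: the paper also evaluates via $\theta\circ\iota_g$, uses the $\Cp$-admissibility at $\tau$ (through a $G_K$-invariant basis of $(\Cp\otimes_F^\tau V)^{G_K}$, equivalent to your trivialization) together with Corollary 4.3 of \cite{STLAV} and lemma \ref{twistder} to get $\theta\circ\iota_g(\nabla_\tau(x))=0$, then lemma \ref{kerim} to obtain divisibility by $Q_m^\tau(y_\tau)$ for all relevant $m$, and concludes with the product formula for $t_\tau$. The final passage from divisibility by each $Q_m^\tau(y_\tau)$ to divisibility by $t_\tau$, which you flag as the delicate point, is left at exactly the same (implicit) level of detail in the paper, so your proposal is correct and follows the same route.
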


\begin{proof}
Take $n=hm+\tilde{n}(\tau)$ with $m$ such that $r_n \geq r$ and let $g \in \weil$ be such that $g{\mid_F}=\tau$ and $n(g)=n$. Let $e_1,\hdots,e_d$ be a basis of $(\Cp \otimes_F^\tau V)^{G_K}$ over $K$, so that it is also a basis of $(\Cp \otimes_F^\tau V)^{H_K}$ over $\hat{K}_\infty$. If $y \in (\Cp \otimes_F^\tau V)^{H_K}$ is $\Qp$-analytic, then we can write $y=\sum_{i=1}^d y_i e_i$ and by lemma \ref{basan}, we have  $y_i \in \hat{K}_\infty^{\la}$. Proposition \ref{nabnul} implies that $\nabla_{\Id} = 0$ on $(\Cp \otimes_F^\tau V)^{H_K}_{\la}$ and therefore that if $x \in \dtdag{,r}_{\rig,K}(V)^{\pa}$, then $\theta \circ \iota_g (\nabla_\tau (x)) = 0$ by lemma \ref{twistder}. Lemma \ref{kerim} implies that if $x \in \dtdag{,r}_{\rig,K}(V)^{\pa}$, then $\nabla_\tau(x)$ is divisible by $Q_m^\tau(y_\tau)$ for all $m$ such that $r_n \geq r$. Since $t_\tau = y_\tau \cdot \prod_{m \geq 1} Q_m^\tau(y_\tau)/\tau (\pi)$, this implies the lemma (the argument for the division by an infinite product is the same as in lemma 4.6 of \cite{LB2}).
\end{proof}

\begin{proof}[Proof of theorem \ref{monokr}]
Let $V$ be an $E$-representation of $G_K$ that is $F$-analytic and let $\mfont  = \dtdag{}_{\rig,K}(V)^{\pa}$. By theorem \ref{lagenok}, $\mfont$ is a free $E \otimes_F (\btrig{}{,K})^{\pa}$-module stable under $\Gamma_K$ and $\phi_q$. Lemma \ref{monasig} implies that $\mfont$ is stable under the differential operators $\{ \partial_\tau \}_{\tau \in \emb \setminus \{ \Id \}}$. By theorem  \ref{monoconj}, $\Sol(\mfont)$ is a free $E \otimes_F (\btrig{}{,K})^{\fpa}$-module of rank $d$ such that there is an isomorphism compatible with $G_K$ and $\phi_q$
\[ (E \otimes_F \btrig{}{}) \otimes_{E \otimes_F (\btrig{}{,K})^{\fpa}} \Sol(\mfont) = (E \otimes_F \btrig{}{}) \otimes_E V. \]
By (3) of theorem \ref{xilocan}, we have $(\btrig{}{,K})^{\fpa} = \brig{}{,K,\infty} = \cup_{n \geq 0} \brig{}{,K,n}$. Since $\Gamma_K$ is topologically of finite type, there exist $n \geq 0$, and a basis $s_1,\hdots,s_d$ of $\Sol(\mfont)$ such that $\Mat(\phi_q) \in \GL_d(E \otimes_F \brig{}{,K,n})$ and $\Mat(g) \in \GL_d(E \otimes_F \brig{}{,K,n})$ for all $g \in \Gamma_K$. If $\drig{} = \oplus_{i=1}^d (E \otimes_F \brig{}{,K}) \cdot \phi_q^n(s_i)$, then $\drig{}$ is a $(\phi_q,\Gamma_K)$-module over $E \otimes_F \brig{}{,K}$ such that $\Sol(\mfont) = (E \otimes_F (\btrig{}{,K})^{\fpa}) \otimes_{E \otimes_F \brig{}{,K}} \drig{}$. 

The module $\drig{}$ is uniquely determined by this condition: if there are two such modules and if $X$ denotes the change of basis matrix and $P_1$, $P_2$ denote the matrices of $\phi_q$, then $X \in \GL_d(E \otimes_F \brig{}{,K,n})$ for $n \gg 0$, and the equation $X=P_2^{-1}\phi(X)P_1$ implies that $X \in \GL_d(E \otimes_F \brig{}{,K})$.

The isomorphism $(E \otimes_F \btrig{}{}) \otimes_{E \otimes_F \brig{}{,K}} \drig{} = (E \otimes_F \btrig{}{}) \otimes_E V$ implies that $\drig{}$ is pure of slope $0$ (see \cite{KSF}). By theorem 6.3.3 of \cite{KSF}, there is an \'etale $(\phi_q,\Gamma_K)$-module $\ddag{}$ over $E \otimes_F \bdag{}_K$ such that $\drig{} = (E \otimes_F \brig{}{,K}) \otimes_{E \otimes_F \bdag{}_K} \ddag{}$. 

Since $\ddag{}$ is \'etale, there exists (by theorem 4.5.7 of \cite{KSF}) an $E$-representation $W$ of $G_K$ such that $(E \otimes_F \btrig{}{}) \otimes_{E \otimes_F \bdag{}_K} \ddag{} = (E \otimes_F \btrig{}{}) \otimes_E W$. Taking $\phi_q$-invariants in $(E \otimes_F \btrig{}{}) \otimes_E W = (E \otimes_F \btrig{}{}) \otimes_E V$ shows that $W=V$. This proves theorem \ref{monokr} for $V$, with $\ddag{}(V) = \ddag{}$.
\end{proof}

\begin{rema}\label{bpair}
The same proof shows that one can attach a (not necessarily \'etale) Lubin-Tate $(\phi_q,\Gamma_K)$-module over $\brig{}{,K}$ to any $F$-analytic $B$-pair (see \cite{LB8}).
\end{rema}

We finish with the statement and proof of theorem D.

\begin{theo}
\label{eqcat}
The functor $V \mapsto \drig{}(V)$ gives rise to an equivalence of categories between the category of $F$-analytic $E$-representations of $G_K$ and the category of \'etale $F$-analytic Lubin-Tate $(\phi_q,\Gamma_K)$-modules over $E \otimes_F \brig{}{,K}$.
\end{theo}

\begin{proof}
If $\drig{}$ is an \'etale $F$-analytic Lubin-Tate $(\phi_q,\Gamma_K)$-module over $E \otimes_F \brig{}{,K}$, then there exists an $E$-representation $V$ of $G_K$ such that $\drig{} = (E \otimes_F \brig{}{,K}) \otimes_{E \otimes_F \bdag{}_K} \ddag{}(V)$. By theorem \ref{mex}, $V$ is $F$-analytic. 
\end{proof}

\subsection*{Acknowledgements} I am grateful to Pierre Colmez for many useful discussions concerning \cite{STLAV} and this paper. In addition, the above constructions are inspired by his observation in the introduction to \cite{CGL2} that ``pour cette \'etude, je disposais d'un certain nombre de points d'appui comme [...] la similitude entre le th\'eor\`eme de Schneider-Teitelbaum sur l'existence de vecteurs localement analytiques et l'existence d'\'el\'ements surconvergents dans n'importe quel $(\phi,\Gamma)$-module \'etale [...]''.

\providecommand{\bysame}{\leavevmode ---\ }
\providecommand{\og}{``}
\providecommand{\fg}{''}
\providecommand{\smfandname}{\&}
\providecommand{\smfedsname}{\'eds.}
\providecommand{\smfedname}{\'ed.}
\providecommand{\smfmastersthesisname}{M\'emoire}
\providecommand{\smfphdthesisname}{Th\`ese}

\end{document}